\newcommand{\mfo}{\mathfrak{o}}
\newcommand{\mft}{\mathfrak{t}}
\newcommand{\kk}{\mathrm{k}}
\newcommand{\hh}{\mathbf{h}}
\newcommand{\ot}{\otimes} 
\newcommand{\ott}{\bar{\otimes}}
\DeclareMathOperator{\HH}{HH}
\DeclareMathOperator{\HHom}{Hom}
\newtheorem{theorem}{Theorem}[section]
\newtheorem{lemma}[theorem]{Lemma}
\newtheorem{Proposition}[theorem]{Proposition}
\theoremstyle{definition}
\newtheorem{definition}[theorem]{Definition}
\newtheorem{example}[theorem]{Example}
\theoremstyle{remark}
\newtheorem{remark}[theorem]{Remark}
\numberwithin{equation}{section}
\title[Cluster-tilted algebra]{Comultiplicative map on projective resolution for a family of algebras one of which is cluster-tilted type $\mathbb{D}_4$}
\author[P.\ Mishra]{Pratyush Mishra}
\address{Wake Forest University, 127 Manchester Hall, Winston-Salem, NC 27109, USA}
\email{mishrap@wfu.edu}
\author[T.\ Oke]{Tolulope Oke}
\address{Wake Forest University, 127 Manchester Hall, Winston-Salem, NC 27109, USA}
\email{oket@wfu.edu}
\date{\today}
\keywords{}
\thanks{}
\date{\today}
\begin{document}


\begin{abstract}
We construct a comultiplicative map on the projective bimodule resolution for a family of algebras one of which is cluster-tilted of type $D_4$. The comultiplicative map presented in terms of idempotents associated with vertices of the quiver and a chain homotopy map is used to describe the multiplicative structure on the Hochschild cohomology ring of the first member of the family. We define a star product used to describe the cup product structure on the Hochschild cohomology ring for all family members.

\end{abstract}

\maketitle

\let\thefootnote\relax\footnote{{\em Key words and phrases: } Cluster-tilted, projective resolutions, quiver algebras, chain homotopy, cup product, Hochschild cohomology. \\ MSC: 16E40, 16W50, 18G10.}

\section{Introduction}\label{introduction}

Let $A$ be a finite dimensional $\kk$-algebra. It is of interest in general to understand the representations of the algebra $A$. The use of homological algebra tools such as finding projective resolutions of $A$, constructing diagonal maps on resolutions, finding associated homology or cohomology groups and extracting meaningful information have provided researchers with several approaches towards understanding algebra representations. This often leads to several questions about how such resolutions may be constructed for example. If the associated projective modules used in the resolution are both left and right modules over the algebra, the resolution is regarded as a bimodule resolution. Several articles have been written to study algebras in which the associated modules in the projective resolution repeat at different degrees. Such algebras are called \textit{periodic algebras}.

Examples of periodic algebras are preprojective algebras associated with Dynkin graphs \cite{DUGAS2010990}, trivial extension algebras of path algebras of Dynkin quivers \cite{Brenner2002}, and algebras associated with cyclic quivers and separable algebras \cite{10.55937/sut/1252509532}. A. B. Buan, R.J. Marsh and I. Reiten in \cite{5a735f82-d5f0-3634-8b0d-9ab687ac9597} introduced cluster-tilted algebras as the endomorphism algebras of cluster-tilting objects in a cluster category. I. Assem, T. Br{\"u}stle, and R. Schiffler proved that an algebra $\tilde{C}$ is cluster-tilted if there is a tilted algebra $C$ (of global dimension at most two) for which $\tilde{C}$ is the trivial extension of $C$ \cite{assem2008cluster}. This suggests that cluster-tilted algebras also have periodic projective bimodule resolutions. 

The minimal projective bimodule resolution of an algebra is used in determining the ring and Gerstenhaber algebra structure on its Hochschild cohomology. There are several techniques used to determine the ring structure on Hochschild cohomology. For instance, the Yoneda product and the Yoneda splicing \cite{witherspoon2019hochschild}. The ring multiplication is usually referred to as the cup product and it is common to determine this product using a diagonal map on the projective bimodule resolution that one has chosen. We refer to the diagonal map as the comultiplicative map throughout this article. Basic homological algebra results ensure the existence of such a map under certain conditions. In practice, we often make a concrete construction of the comultiplication map for each algebra of interest. 

T. Furaya \cite{furuya2012projective} studied a class of algebras which are special biserial algebras one of which is a cluster-tilted algebra of type $D_4$. We construct a comultiplication map on the projective bimodule resolution for these class of algebras. Our construction is applicable to algebras of finite representation types having periodic projective bimodule resolutions or having \textit{almost periodic} projective resolutions of period three. By \textit{almost periodic}, we mean that the kernel of the first differential map has a periodic resolution. This can be a spring board for constructing explicit comultiplicative maps in terms of vertices of associated quivers for larger classes of algebras in future work.

We introduce quiver algebras and the family of algebras under study in Section \ref{preliminaries}. We also present other preliminary results on Hochschild cohomology ring of algebras, the projective bimodule resolutions $\mathbb{Q}_{\bullet}$ and useful module compatibility structure in \ref{preliminaries}. In Section \ref{Main results}, we present our main results describing the comultiplicative map for the family of algebras under study. In order to describe the multiplicative structure on Hochschild cohomology, we further present in Section \ref{Main results} a star product using certain chain map.

More specifically, and using the resolution $\mathbb{Q}_{\bullet}$, we construct a comultiplicative map $\Delta:\mathbb{Q}_{\bullet}\xrightarrow{}\mathbb{Q}_{\bullet}\ot_{\Lambda_n}\mathbb{Q}_{\bullet}$ where the tensor product is taken over the algebra $\Lambda_n$. Throughout, we distinguish between the tensor product over the field and this tensor product by defining $\ott:=\ot_{\Lambda_n}$. Our first result is presented in Lemma \ref{comultiplicative_map} where we defined the chain map $\Delta'$ lifting the map $\pi:\Lambda_n\rightarrow\Lambda_n$ given as multiplication by two. We then state our main result in Theorem \eqref{comultiplicative_map} where we define $\Delta = \Delta'+\hh\partial+d\hh$ for some chain homotopy map $\hh$. We use $\Delta$ to describe the cup product structure on Hochschild cohomology $\HH^*(\Lambda_0)$ of the cluster-tilted algebra of type $D_4$. Using the constructed chain map $\Delta'$, we present a table of multiplication $f\star g$ at the chain level for elements $f,g\in \HHom_{\Lambda_n}(\mathbb{Q}_{j},\Lambda_n)$ for $j=0, 3m, 3m+1, 3m+2$ in Theorem \eqref{comultiplicative_structure}. The ring structure on Hochschild cohomology groups $\HH^j(\Lambda_n)$ at the chain level can be obtain using Theorem \eqref{comultiplicative_structure}, the differential maps $\partial^{\bullet}$ and $d^{\bullet}$ and chain homotopy $\hh^{\bullet}$. We present some examples on how to define $\hh$ in Definition \ref{homotopy-equation}. As noted in \cite{KARADAG2021106597}, the comultiplicative map will be helpful in describing the Gerstenhaber algebra structure on Hochschild cohomology of these family of finite dimensional algebras.

\section{Preliminaries}\label{preliminaries}
\textbf{Quiver algebras:} A quiver is a directed graph. The quiver $\mathcal{Q}$ will sometimes be denoted as a quadruple $(\mathcal{Q}_0,\mathcal{Q}_1,\mathfrak{o},\mathfrak{t})$ where $\mathcal{Q}_0$ is the set of vertices in $\mathcal{Q}$, $\mathcal{Q}_1$ is the set of arrows in $\mathcal{Q}$, and $\mfo,\mft: \mathcal{Q}_1 \longrightarrow \mathcal{Q}_0$ are maps which assign to each arrow $a\in \mathcal{Q}_1$, its origin vertex $\mfo(a)$ and terminal vertex $\mft(a)$ in $\mathcal{Q}_0$. A sequence of paths is written $a = a_1 a_2 \cdots a_{n-1} a_n $ such that the terminal vertex of $a_{i}$ is the same as the origin vertex of $a_{i+1}$, using the convention of concatenating paths from left to right. Let $\mathrm{k}$ be a field. The quiver or path algebra $\mathrm{k}Q$ is defined as a vector space having all paths in $\mathcal{Q}$ as a basis. Vertices are regarded as paths of length $0$ or trivial paths, an arrow is a path of length $1$, and so on. The multiplication on $\mathrm{k}Q$ is concatenation of paths. Two paths $a$ and $b$ satisfy $ab=0$ if and only if $\mft(a)\neq \mfo(b)$. This multiplication defines an associative graded algebra over $\mathrm{k}$ given by $\mathrm{k}\mathcal{Q} = \bigoplus_{m\geq 0} \mathrm{k}\mathcal{Q}_m$. The path algebra is unital if and only if the quiver has only finitely many vertices. 

We now consider the following quiver $\mathcal{Q}$ which was studied in \cite{furuya2012projective}.

\begin{center}
$\mathcal{Q}$: \qquad \begin{tikzcd}
&   \arrow[dl,"a_1"]e_1 &  \\
e_2  \arrow[rr,"a_2= b_2"]&& e_0 \arrow[ul,"a_0"] \arrow[dl,"b_0"] \\
& \arrow[ul,"b_1"] f_1 & 
\end{tikzcd}
\end{center}

The vertices of $\mathcal{Q}$ are elements of the set of trivial paths $\{e_0,e_1,e_2,f_1\}$ and the arrows are given by the set $\{a_0, a_1, a_2=b_2, b_0, b_1\}$. Throughout, we will have all indices $i$ for $e_i,f_i, a_i$ and $b_i$ to be given modulo 3 where $f_0=e_0$ and $f_2=e_2$. The unital element for $\mathcal{Q}$ is $e_0+e_1+f_1+e_2$. 

We can take a linear combination of paths as long as they are parallel. Parallel paths have the same origin and terminal vertices. Such a linear combination is a uniform path or relation. We can take the ideal generated by a set of relations. In the quiver $\mathcal{Q}$, the relations $(a_0a_1a_2)^na_0a_1$ is a path of length $3n+2$ having origin vertex $\mfo(a_0)=e_0$ and terminal vertex $\mft(a_1)=e_2$. This path is parellel to $b_0b_1$ since they both have the same origin and terminal vertices. For each $n\geq 0$ and for $i=1,2$, we consider the ideal generated by the following relations
\begin{equation}\label{fam-ideals}
I_n =\langle (a_0a_1a_2)^na_0a_1-b_0b_1, (a_ia_{i+1}a_{i+2})^na_{i}a_{i+1}, b_ib_{i+1} \rangle
\end{equation}
and study the family of quiver algebras 
\begin{equation}\label{family}
\Lambda_n = \kk\mathcal{Q}/I_n.
\end{equation}

\textbf{What is known:}  Cluster algebras have direct connections to the theory of representation of quivers and they were introduced by Fomin and Zelevinsky in \cite{fomin2002cluster}. Cluster algebras are obtained from quivers and one way a cluster algebra is obtained from a quiver is using the so-called quiver mutations. A detail description of this technique is given in \cite[Section 3.2]{Keller_2010}. The notion of tilting is categorical and has to do with obtaining a cluster-tilted algebra from the category of cluster algebras obtained from a quiver. 

According to \cite{furuya2012projective}, the algebra $\Lambda_n$ is a special biserial algebra. One of the key features of special biserial algebras is that they have a very controlled representation theory. This means that their representations can be described in a relatively simple and structured way. An example showing that when $n=0$, the algebra $\Lambda_0$ is a cluster-tilted algebra of the Dynkin type $\mathbb{D}_4$ is given in \cite[Example 3.5]{assem2008cluster}. T. Furuya in \cite{furuya2012projective} adapted a construction of a projective bimodule resolutions for special biserial algebras described in \cite{snashall2010hochschild} to construct a projective bimodule resolution $\mathbb{Q}_{\bullet}$ for the family $\Lambda_n$. This is similar to the projective bimodule resolution constructed for Koszul algebras in \cite{green2005resolutions} and for a family of quiver algebras whose Hochschild cohomology modulo nilpotent elements is not finitely generated in \cite{Oke04052022}. The construction of the projective bimodule resolution required the construction of sets $\mathcal{G}^i$, $i\geq 0$ of uniform paths of length greater than or equal to $i$. The resolution was used to obtain the main result of \cite{furuya2012projective}.  That is, describe explicit generating sets for the Hochschild cohomology groups $\HH^j(\Lambda_n)$ for each $j$, and obtain a corresponding formula for the dimension of such vector space. 


\textbf{The Hochschild cohomology} of an associative $\kk$-algebra $A$ was originally defined using the following projective bimodule resolution known as the bar resolution. It is given by
\begin{equation}\label{bar}
\mathbb{B}_{\bullet}(A):  \qquad\cdots \rightarrow A^{\ot (m+2)}\xrightarrow{\;\delta_m\; }A^{\ot (m+1)}\xrightarrow{\delta_{m-1}} \cdots \xrightarrow{\;\delta_2\;}A^{\ot 3}\xrightarrow{\;\delta_1\;}A^{\ot 2}\;(\;\xrightarrow{\mu} A)
\end{equation}
where $\mu$ is multiplication and the differentials $\delta_m$ are given by
\begin{equation}\label{bar-diff}
\delta_n(a_0\otimes a_1\otimes\cdots\otimes a_{m+1}) = \sum_{i=0}^m (-1)^i a_0\otimes\cdots\otimes a_ia_{i+1}\otimes\cdots\otimes a_{m+1}
\end{equation}
for all $a_0, a_1,\ldots, a_{m+1}\in A$. Each $A$-bimodule $A^{\ot (m+1)}$ is a left modules over the enveloping algebra $A^e = A\ot A^{op}$, where $A^{op}\cong A$ with reverse multiplication as an algebra. The resolution is sometimes written $\mathbb{B}_{\bullet}\xrightarrow{\mu}\Lambda$ with $\mu$ referred to as the augmentation map. The Hochschild cohomology of $A$ with coefficients in $A$ denoted $ \HH^*(A)$ is obtained by applying the functor $\HHom_{A^e}(-,A)$ to the complex $\mathbb{B}_{\bullet}(A)$, and then taking the cohomology of the resulting cochain complex. That is 
$$ \HH^*(A) := \bigoplus_{j\geq 0} \HH^j(A)=\bigoplus_{j\geq 0} \text{H}^j(\HHom_{A^e}(\mathbb{B}_{m}(A),A)). $$

Let $\chi\in\HHom_{A^e}(\mathbb{B}_m,A)$ be a cocycle, that is, $(\delta^{*}(\chi))(\cdot) := \chi\delta(\cdot)=  0.$ There is an isomorphism of the abelian groups $\HHom_{A^e}(\mathbb{B}_m(A),A)\cong\HHom_{k}(A^{\otimes m},A),$ so we can also view $\chi$ as an element of $\HHom_{k}(A^{\otimes m},A)$.

\textbf{The cup product:} We obtain the multiplicative structure on Hochschild cohomology using the following.
\begin{definition}\label{cup-definition}
Let $A$ be a $\kk$-algebra. Let $\Delta_{\mathbb{B}}:\mathbb{B}\xrightarrow{}\mathbb{B}\ot_{A}\mathbb{B}$ be the comultiplicative map lifting the identity map on $A \cong A\ot_{A} A$. Let $f\in\HHom_{\kk}(A^{\otimes m},A)$ and $g\in \HHom_{\kk}(A^{\otimes n},A)$ be cocycles of degree $m$ and $n$ respectively. The cup product $f\smallsmile g$ at the chain level is an element of $\HHom_{\kk}(A^{\otimes (m+n)},A)$ given by
\begin{equation*}\label{cup-equ-1}
f\smallsmile g = \tilde{\pi} (f\ot g)\Delta_{\mathbb{B}},
\end{equation*}
where $\tilde{\pi}$ is multiplication, and $\Delta_{\mathbb{B}}$ is given by 
\begin{equation}\label{diag-bar}
\Delta_{\mathbb{B}}(a_0\otimes\cdots\otimes a_{n+1}) = \sum_{i=0}^n (a_0\otimes\cdots\otimes a_i\ot 1)\ot_{A}( 1\ot a_{i+1}\otimes\cdots\otimes a_{n+1}).
\end{equation}
\end{definition}

\textbf{The projective bimodule resolution $\mathbb{Q}_{\bullet}$:} We now describe the projective bimodule resolution $\mathbb{Q}_{\bullet}$ that was used to obtain the result of \cite[Theorem 4.10]{furuya2012projective}. 

For ease of notations and computations, in $\mathcal{Q}$, we label $e_0=f_0$, $e_2=f_2$ and $a_2=b_2$ so that a path of length three from $e_0$ to $e_0$ could be written as $a_0a_1a_2$ for the upper triangle of the quiver and as $b_0b_1b_2$ for the lower triangle of the quiver. Throughout, we will have all indices $i$ for $e_i,f_i, a_i$ and $b_i$ to be given modulo 3. 

We first define a set of uniform paths of length $i$ given as $\mathcal{G}^i$, for $i=0,1,2,\ldots$ in the following ways. Note that the path $(a_0a_1a_2)^na_0a_1$ of length $3n+2$ can also be written as $a_0(a_1a_2a_0)^na_1$.
\begin{gather*}
\mathcal{G}^0 =\{\text{vertices}\} =  \{R^0=e_0=f_0 , S^0=e_1, T^0=f_1, U^0=e_2=f_2\},\\
\mathcal{G}^1 =\{\text{arrows}\} =  \{R^1_0=a_0, R^1_1=b_0 , S^1=a_1, T^1=b_1, U^1=a_2=b_2\},\\
\mathcal{G}^2 =\{\text{minimal generating set for $I_n$}\} =  \{R^2=a_0(a_1a_2a_0)^na_1-b_0b_1, \\
S^2=a_1(a_2a_0a_1)^na_2, T^2=b_1b_2,  U^2_0=a_2(a_0a_1a_2)^na_0, U^2_1=b_2b_0\},
\end{gather*}
and in general,
\begin{equation}\label{uniform-paths}
\mathcal{G}^i = \begin{cases} \{R^i, S^i_0, S^i_1, T^i_0, T^i_1, U^i\}  & \text{ if } i\equiv 0 \pmod{3}, \\ 
\{R^i_0, R^i_1, S^i, T^i, U^i\}  & \text{ if } i\equiv 1 \pmod{3},\\
\{R^i, S^i, T^i, U^i_0, U^i_1\}  & \text{ if } i\equiv 2 \pmod{3}, \end{cases}
\end{equation}
where the uniform paths $\{R^i_j, S^i_j, T^i_j, U^i_j\}$ are defined recursively as follows;
\begin{enumerate}
\item[(R)] \begin{align*}
R^{6i+1}_j = \begin{cases} R^{6i}a_0 & \text{ if } j=0 \\  R^{6i}b_0 & \text{ if } j=1 \end{cases}, \quad  & R^{6i+2} = R^{6i+1}_0(a_1a_2a_0)^na_1 - R^{6i+1}_1b_1,\\
R^{6i+3} = R^{6i+2}a_2 = R^{6i+2}b_2, \quad & R^{6i+4}_j = \begin{cases} R^{6i+3}(a_0a_1a_2)^na_0 & \text{ if } j=0 \\  R^{6i+3}b_0 & \text{ if } j=1 \end{cases} \\
 R^{6i+5} = R^{6i+4}_0a_1 - R^{6i+4}_1b_1, \quad & R^{6i+6} = R^{6i+5}(a_2a_0a_1)^na_2.
\end{align*}
\item[(S)] \begin{align*}
 S^{6i+1} = S^{6i}_0a_1 - S^{6i}_1b_1, \quad & S^{6i+2} = S^{6i+1}(a_2a_0a_1)^na_2, \\
S^{6i+3}_j = \begin{cases} S^{6i+2}a_0 & \text{ if } j=0 \\  S^{6i+2}b_0 & \text{ if } j=1 \end{cases}, \quad  & S^{6i+4} = S^{6i+3}_0(a_1a_2a_0)^na_1 - S^{6i+3}_1b_1,\\
S^{6i+5} = S^{6i+4}a_2 = S^{6i+4}b_2, \quad & S^{6i+6}_j = \begin{cases} S^{6i+5}(a_0a_1a_2)^na_0 & \text{ if } j=0 \\  S^{6i+5}b_0 & \text{ if } j=1 \end{cases}.
\end{align*}
\end{enumerate}
 
\begin{enumerate}
\item[(T)] \begin{align*}
 T^{6i+1} = T^{6i}_0(a_1a_2a_0)^na_1 - T^{6i}_1b_1, \quad & T^{6i+2} = T^{6i+1}a_2  = T^{6i+1}b_2, \\
T^{6i+3}_j = \begin{cases} T^{6i+2}(a_0a_1a_2)^na_0 & \text{ if } j=0 \\  T^{6i+2}b_0 & \text{ if } j=1 \end{cases}, \quad  & T^{6i+4} = T^{6i+3}_0a_1 - T^{6i+3}_1b_1,\\
T^{6i+5} = T^{6i+4}(a_2a_1a_0)^na_2, \quad & T^{6i+6}_j = \begin{cases} T^{6i+5}a_0 & \text{ if } j=0 \\  T^{6i+5}b_0 & \text{ if } j=1 \end{cases}.
\end{align*}
\end{enumerate}

\begin{enumerate}
\item[(U)] \begin{align*}
U^{6i+1} = U^{6i}a_2 = U^{6i}b_2, \quad & U^{6i+2}_j = \begin{cases} U^{6i+1}(a_0a_1a_2)^na_0 & \text{ if } j=0 \\  U^{6i+1}b_0 & \text{ if } j=1 \end{cases} \\
U^{6i+3} = U^{6i+2}_0a_1 - U^{6i+2}_1b_1, \quad & U^{6i+4} = U^{6i+3}(a_2a_0a_1)^na_2, \\
U^{6i+5}_j = \begin{cases} U^{6i+4}a_0 & \text{ if } j=0 \\  U^{6i+4}b_0 & \text{ if } j=1 \end{cases}, \quad  & U^{6i+6} = U^{6i+5}_0(a_1a_2a_0)^na_1 - U^{6i+5}_1b_1.
\end{align*}
\end{enumerate}

For all elements $g\in\mathcal{G}^i$, the following are $\Lambda_n$-bimodules;
$$\mathbb{Q}_{m} = \bigoplus_{g\in\mathcal{G}^m} \Lambda_n \mfo(g)\ot \mft(g)\Lambda_n.$$
Note that for $g_1\neq g_2$ in $\mathcal{G}^m$, $\mfo(g_1)\ot \mft(g_1)\neq \mfo(g_2)\ot \mft(g_2)$ and a generating set of $\mathbb{Q}_m$ is $\{\mfo(g)\ot \mft(g)\}_{g\in\mathcal{G}^m} = \{e_r\ot e_s, f_r\ot e_s, e_r\ot f_s, f_r\ot f_s\}$ where $r,s =0,1,2$. For example, 
\begin{align*}
\mathbb{Q}_0 &=  \Lambda_n \mfo(R^0)\ot \mft(R^0)\Lambda_n \oplus  \Lambda_n \mfo(S^0)\ot \mft(S^0)\Lambda_n \\
& \oplus \Lambda_n \mfo(T^0)\ot \mft(T^0)\Lambda_n \oplus  \Lambda_n \mfo(U^0)\ot \mft(U^0)\Lambda_n  \\
& =  (\Lambda_n e_0\ot e_0\Lambda_n) \oplus  (\Lambda_n e_1\ot e_1\Lambda_n) \oplus (\Lambda_n f_1\ot f_1\Lambda_n) \oplus  (\Lambda_n e_2\ot e_2\Lambda_n),
\end{align*}
so a generating set for $\mathbb{Q}_0$ as a bimodule is $\{e_0\ot e_0, e_1\ot e_1, f_1\ot f_1, e_2\ot e_2\}$. Note that this consists of all paths of length zero from vertex $i$ to $i$. Similarly, $\mathbb{Q}_1$ will be generated by paths of length 1 from vertex $i$ to $j$, $\mathbb{Q}_2$ will be generated by paths of length 2 from vertex $i$ to $j$ and so on. No new $e_i\ot e_j$ is obtain when we consider paths of length greater than or equal four. Therefore, a basis for the generating set for $\mathbb{Q}_m$ occurs mode 3 in one of the following cases for $m\geq 0$.
\begin{itemize}
\item $\mathbb{Q}_{3m}$, generated by $\{e_0\ot e_0, e_1\ot e_1, e_1\ot f_1, f_1\ot e_1, f_1\ot f_1, e_2\ot e_2\}$,
\item $\mathbb{Q}_{3m+1}$, generated by $\{e_0\ot e_1, e_0\ot f_1, e_1\ot e_2, f_1\ot e_2, e_2\ot e_0\}$.
\item $\mathbb{Q}_{3m+2}$, generated by $\{e_0\ot e_2, e_1\ot e_0, f_1\ot e_0, e_2\ot e_1, e_2\ot f_1\}$,
\end{itemize}

Although basis elements are presented for $\mathbb{Q}_{3m}, \mathbb{Q}_{3m+1}$ and $\mathbb{Q}_{3m+2}$, the differentials occur in mode 6, so the chain complex has the following form;
$$\cdots \xrightarrow{}\mathbb{Q}_{3m+5}\xrightarrow{\partial^{6j+5} }\mathbb{Q}_{3m+4}\xrightarrow{\partial^{6j+4}}\mathbb{Q}_{3m+3}\xrightarrow{\partial^{6j+3}}\mathbb{Q}_{3m+2}=$$
$$=\mathbb{Q}_{3m+2}\xrightarrow{ \partial^{6j+2}}\mathbb{Q}_{3m+1}\xrightarrow{ \partial^{6j+1}}\mathbb{Q}_{3m}\xrightarrow{ \partial^{6j}}\mathbb{Q}_{3m-1}\xrightarrow{}\cdots$$
We now present a theorem that formally state these results.

\begin{theorem}\cite[Theorem 3.3]{furuya2012projective}\label{major-res}
Let $\Lambda_n = \kk\mathcal{Q}/I_n$ be a family of quiver algebras described in \eqref{family} and let $\mathcal{G}^i$ be a set of uniform relations described by \eqref{uniform-paths}. A minimal projective bimodule resolution of $\Lambda_n$  is given by
\begin{equation*}
\mathbb{Q}_{\bullet}(\Lambda_n):  \quad\cdots \rightarrow \mathbb{Q}_{3m}\xrightarrow{\;\partial^{6j}\; }\mathbb{Q}_{3m-1}\xrightarrow{\partial^{6j-5}} \cdots \xrightarrow{\;\partial^2\;}\mathbb{Q}_{1}\xrightarrow{\;\partial^1\;}\mathbb{Q}_{0}\;(\;\xrightarrow{\;\partial^0\;} \Lambda_n)
\end{equation*}
for $m\geq 0$, where the differentials $\partial^{\bullet}:\mathbb{Q}_{3m}\xrightarrow{}\mathbb{Q}_{3m-1}$ applied to the generators of each bimodule $\mathbb{Q}_{3m}$ are given in Definition \eqref{differentials} and $\partial^0$ is multiplication.
\end{theorem}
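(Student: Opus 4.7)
The plan is to follow the general machinery for special biserial algebras developed by Snashall--Taillefer in \cite{snashall2010hochschild}, specialized to the algebra $\Lambda_n = \kk\mathcal{Q}/I_n$. Since $\Lambda_n$ is special biserial (as noted in the paragraph on \emph{What is known}), the construction there provides a prescription for producing a minimal projective bimodule resolution whose $m$-th term is a direct sum $\bigoplus_{g\in\mathcal{G}^m} \Lambda_n \mfo(g)\ot \mft(g)\Lambda_n$ indexed by a recursively-defined set $\mathcal{G}^m$ of uniform elements of $\kk\mathcal{Q}$. The substance of the proof is therefore to verify that the sets $\mathcal{G}^m$ listed in \eqref{uniform-paths} are exactly the sets produced by that recipe for the particular ideal $I_n$, and that with the differentials of Definition \eqref{differentials} the resulting complex is an exact, minimal projective bimodule resolution.

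First I would check projectivity of each $\mathbb{Q}_m$: this is immediate since each summand $\Lambda_n \mfo(g)\ot \mft(g)\Lambda_n$ is an indecomposable projective $\Lambda_n$-bimodule, being generated by a primitive idempotent of $\Lambda_n^e = \Lambda_n\ot \Lambda_n^{op}$. Next I would confirm the base of the recursion: $\mathcal{G}^0$ equals the set of vertices, $\mathcal{G}^1$ equals the set of arrows, and $\mathcal{G}^2$ is the minimal generating set of $I_n$ given in \eqref{fam-ideals}, so that $\mathbb{Q}_2 \to \mathbb{Q}_1 \to \mathbb{Q}_0 \to \Lambda_n \to 0$ is exact by standard considerations on path algebras modulo an admissible ideal. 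The inductive step amounts to showing that the elements $R^m, S^m, T^m, U^m$ (with their $j=0,1$ variants when present) produced by the six-fold recursion (R), (S), (T), (U) are precisely the minimal set of uniform paths whose associated bimodule generators map into the kernel of the previous differential, and that every element of that kernel is in the image.

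I would then verify the chain condition $\partial^{k+1}\partial^{k} = 0$ by a case analysis on $k\pmod 6$ and on which of the letters $R,S,T,U$ is being considered, using the defining relations of $I_n$: the key identities are $a_0a_1a_2 \cdots$-style relations reducing via $(a_0a_1a_2)^n a_0a_1 = b_0b_1$ in $\Lambda_n$, together with $b_1b_2 = b_2b_0 = 0$ and the vanishing of the corresponding $a$-monomials in $I_n$. For exactness at each intermediate degree, the argument is more delicate: one has to show that any bimodule element annihilated by $\partial^k$ is a combination, with coefficients in $\Lambda_n$, of the images of the generators of $\mathbb{Q}_{k+1}$. This is where the period-6 structure of the recursion really matters, because the four ``letters'' cycle through qualitatively different shapes (with $j=0,1$ branching, then a single element, then $j=0,1$ branching again), and one must check that at each stage exactly the right combinations are produced by the recipe. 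Minimality is then the observation that every entry of each differential, when written with respect to the chosen idempotent generators, lies in the arrow ideal $\mathrm{rad}(\Lambda_n^e)$, because every $R^m, S^m, T^m, U^m$ for $m\ge 1$ is a $\kk$-linear combination of paths of positive length.

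The main obstacle I expect is the bookkeeping for the exactness step at each of the six residues modulo $6$. Since the recursion mixes the two quiver triangles (the $a$-triangle and the $b$-triangle) through relations of the form $R^{6i+2} = R^{6i+1}_0 (a_1a_2a_0)^n a_1 - R^{6i+1}_1 b_1$, one has to track carefully how the cancellation between the $a$- and $b$-branches yields precisely the relations in $I_n$ after multiplication. I would handle this by performing the verification in detail for $n=0$ (the cluster-tilted $\mathbb{D}_4$ case) first, where the paths $(a_0a_1a_2)^n a_0a_1$ collapse to $a_0a_1$, and then lifting the argument to general $n\ge 0$ by noting that the longer paths $(a_i a_{i+1} a_{i+2})^n$ behave as scalars on each side of the cancellation thanks to the zero relations $(a_i a_{i+1} a_{i+2})^n a_i a_{i+1} = 0$ in $I_n$ for the non-critical vertices.
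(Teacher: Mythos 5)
The paper does not actually prove this theorem: it is imported from Furuya \cite[Theorem 3.3]{furuya2012projective}, and the only indication of method given here is that Furuya adapted the Snashall--Taillefer construction for special biserial algebras from \cite{snashall2010hochschild}. Your outline follows exactly that route --- identify $\mathcal{G}^0,\mathcal{G}^1,\mathcal{G}^2$ with the vertices, arrows and a minimal generating set of $I_n$, then establish $\partial^{k+1}\partial^{k}=0$, exactness and minimality inductively through the period-six recursion --- so it is essentially the same approach as the cited source, and sound at the level of detail offered.
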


\begin{definition}\label{differentials}
\begin{enumerate}[label=(\roman*)]\ \\
\item For $j=0$, we take $\partial^{6j}=\partial^0$ and for $j\geq 1$, and $l=0,1,2$\begin{equation*}
    \partial^{6j}:\begin{cases}
    e_l\ot e_l \mapsto \begin{cases}
        e_0\ot (a_2a_0a_1)^na_2+ (\sum_{k=0}^{3n-1}a_0a_1a_2\dots a_k\ot a_k \dots a_{3n-1})\\
        +(a_0a_1a_2)^na_0\ot e_0-b_0\ot e_0 & \text{ if } l=0,\\ \\
        e_1\ot (a_0a_1a_2)^na_0+(\sum_{k=1}^{3n}a_1a_2a_3\dots a_k\ot a_k \dots a_{3n})\\
        +(a_1a_2a_0)^na_1\ot e_1 & \text{if } l=1,\\ \\
        e_2\ot (a_1a_2a_0)^na_1 + (\sum_{k=2}^{3n+1}a_2a_3a_4\dots a_k \ot a_k \dots a_{3n+1})
        \\ +(a_2a_0a_1)^na_2\ot e_2-e_2\ot b_1 & \text{ if } l=2,
    \end{cases}
    \\
    \\
    e_1\ot f_1 \mapsto e_1\ot b_0 +a_1\ot f_1,\\
    f_1\ot e_1 \mapsto f_1\ot a_0 +b_1\ot e_1,\\
    f_1\ot f_1\mapsto f_1\ot b_0 + b_1\ot f_1.
\end{cases}
\end{equation*}

\item For $j=0$, and $l=0,1,2$, $r=0,1$, we take 
$$\partial^{6j+1}=\partial^1: \begin{cases} e_l\ot e_{l+1} \mapsto e_l\ot a_l - a_l\ot e_{l+1}, \\ f_r\ot f_{r+1} \mapsto f_r\ot b_r - b_r\ot f_{r+1}, \end{cases}$$ 
and for $j\geq 1$ and $l=0,1,2$, $r=0,1$, we have
\begin{equation*}
    \partial^{6j+1}:\begin{cases}
        e_l\ot e_{l+1} \mapsto \begin{cases}
            e_0\ot a_0 -a_0\ot e_1 + b_0\ot e_1 & \text{ if } l=0,\\
            e_1\ot a_1- a_1\ot e_2 -e_1\ot b_1 & \text{ if }l=1,\\
            e_2\ot a_2 -a_2\ot e_0 & \text{ if }l=2,
        \end{cases}
        \\
        \\
        f_r\ot f_{r+1} \mapsto \begin{cases}
            e_0\ot b_0 + b_0 \ot f_1 -(a_0a_1a_2)^na_0\ot f_1 \text{ if } r=0,\\
            -f_1\ot b_1 -b_1\ot e_2 + f_1\ot (a_1a_2a_0)^n a_1 & \text{ if }r=1.
        \end{cases}
    \end{cases}
\end{equation*}

\item For $j\geq 0$, $l=0,1,2$ and $r=1,2$, we have
\begin{equation*}
\partial^{6j+2}: \begin{cases} e_l\ot e_{l+2} \mapsto \begin{cases} e_0\ot (a_1a_2a_0)^na_1 + \sum_{k=0}^{3n-1}a_0a_1a_2\cdots a_k\ot a_{k+2}\cdots a_{3n+1} \\
+(a_0a_1a_2)^na_0\ot e_2 - e_0\ot b_1 - b_0\ot e_2 & \text{ if } l=0,
\\
\\ e_l\ot (a_{l+1}a_{l+2}a_l)^na_{l+1} \\
+ \sum_{k=l}^{3n+l-1}a_la_{l+1}a_{l+2}\cdots a_k\ot a_{k+2}\cdots a_{3n+l+1} \\
+(a_la_{l+1}a_{l+2})^na_l\ot e_{l+2}  & \text{ if }l=1,2
\end{cases}
\\ 
\\
f_r\ot f_{r+2} \mapsto f_r\ot b_{r+1} + b_r\ot f_{r+2}, \end{cases}
\end{equation*}
\item For $j\geq 0$, $l=0,1,2$ we have
\begin{equation*}
\partial^{6j+3}: \begin{cases} e_l\ot e_{l} \mapsto \begin{cases} e_0\ot a_2 - a_0\ot e_0 + b_0\ot e_0 & \text{ if } l=0,\\
e_1\ot a_0 - a_1\ot e_1              & \text{ if } l=1,\\
e_2\ot a_1 - a_2\ot e_2 - e_2\ot b_1 & \text{ if } l=2,\\
\end{cases}
\\ 
\\
e_1\ot f_{1} \mapsto e_1\ot b_0 - (a_1a_2a_0)^n a_1\ot f_{1}, \\
f_1\ot e_{1} \mapsto f_1\ot (a_0a_1a_2)^n a_0 -  b_1\ot e_{1}, \\
f_1\ot f_{1} \mapsto f_1\ot b_{0} - b_1\ot f_{1}. 
\end{cases}
\end{equation*}
\item For $j\geq 0$, $l=0,1,2$ and $r=0,1$
\begin{equation*}
    \partial^{6j+4}: \begin{cases} e_l\ot e_{l+1} \mapsto \begin{cases}
        e_0\ot (a_0a_1a_2)^na_0 +(\sum_{k=0}^{3n-1}a_0a_1a_2\dots a_k\ot a_{k+1}\dots a_{3n})\\+(a_0a_1a_2)^na_0\ot e_1-b_0\ot e_1 & \text{ if } l=0,\\ \\
        e_1\ot (a_1a_2a_0)^na_1 + (\sum_{k=1}^{3n}a_1a_2a_3 \dots a_k\ot a_{k+1}\dots a_{3n+1}) \\
        +(a_1a_2a_0)^na_1\ot e_2-e_1\ot b_1 & \text{ if } l=1,\\ \\
        e_2\ot (a_2a_0a_1)^na_2+\\
        (\sum_{k=2}^{3n+1}a_2a_3a_4\dots a_k \ot a_{k+1}\dots a_{3n+2}) + (a_2a_0a_1)^na_2\ot e_0 & \text{ if } l=2,\\
    \end{cases}
    \\
    \\
        f_r\ot f_{r+1} \mapsto \begin{cases}
            e_0\ot b_0 - b_0\ot f_1 + a_0\ot f_1 & \text{ if } r=0,\\
            -f_1\ot b_1 + b_1\ot e_2 + f_1\ot a_1 & \text{ if } r=1.
        \end{cases}
    \end{cases}
\end{equation*}
\item For $j\geq 0$, $l=0,1,2$ and $r=1,2$
\begin{equation*}
    \partial^{6j+5} : \begin{cases}
        e_l\ot e_{l+2} \mapsto \begin{cases}
            e_0\ot a_1 -a_0\ot e_2 -e_0\ot b_1 + b_0\ot e_2 & \text{ if } l=0,\\
            e_1\ot a_2-a_1\ot e_0 & \text{ if }l=1,\\
            e_2\ot a_0 - a_2\ot e_1 & \text{ if }l=2,\\
        \end{cases}
        \\
        \\
        f_r\ot f_{r+2} \mapsto \begin{cases}
            f_1\ot (a_2a_0a_1)^na_2-b_1\ot e_0 & \text{ if } r=1,\\
            e_2\ot b_0 -(a_2a_0a_1)^na_2\ot f_1 & \text{ if } r=2.
        \end{cases}
    \end{cases} 
\end{equation*}
\end{enumerate}
\end{definition}\ \\

\textbf{Module structure compatibility:} Suppose that $A$ and $B$ are $\kk$-algebras, $M$ is a left $A$-module and $N$ is a left $B$-module. Then the tensor product $M\ot_{A\ot B} N$ is a left $A\ot B$ module and we have the following module structure
$(A\ot B)\times (M\ot_{A\ot B} N) \rightarrow M\ot_{A\ot B} N$ given by $(a\ot b)\cdot (m\ot n) \mapsto am\ot bn$ for every $a\in A, b\in B, m\in M$ and $n\in N.$

Let $A\ot A^{op}\cong A^e$ be the enveloping algebra of $A$ where $(A^{op},\cdot_{op})$ is the opposite algebra of $A$ with opposite multiplication given by $a\cdot_{op}b =ba$ for every $a,b\in A$. If $M$ is a left $A^e$-module, then we have the following module structure $A^e\times M\rightarrow M$ given by $(a\ot a')\cdot m = ama'$. If $N$ is also a left $A^e$-module, then $M\ot_{A^e} N := M\ott N$ is a left $A^e\ot A^e$-module and the module structure $(A^e\ot A^e)\times (M\ot_{A^e} N) \rightarrow M\ot_{A^e} N$ is given by $(r\ot s)\cdot (m\ott n) = r\cdot m\ott s\cdot n$ for every $r, s\in A^e, m\in M$ and $n\in N$. Since $A^e\ot A^e \cong A^e$, then $M\ot_{A^e} N$ is also a left $A^e$-module and the module structure $A^e\times (M\ot_{A^e} N) \rightarrow M\ot_{A^e} N$ is given by $(r\ot r')\cdot (m\ott n) = rm\ott nr'$ for every $r, r'\in A, m\in M$ and $n\in N$. Since these module structures are compatible for these algebras, we have in particular, for $r_1, r_2, s_1, s_2\in A$, 
\begin{align}\label{mod1}
(r_1\ot s_1)(r_2\ot s_2)\cdot (m\ott n) = (r_1r_2\ot s_2s_1)\cdot (m\ott n) = r_1r_2m \ott ns_2s_1,
\end{align}
and when we use the module structure for $(A^e\ot A^e)\times (M\ot_{A^e} N) \rightarrow M\ot_{A^e} N$ earlier given, we obtain
\begin{align}\label{mod2}
(r_1\ot s_1)(r_2\ot s_2)\cdot (m\ott n) = ((r_1\ot s_1)\cdot m) \ott ((r_2\ot s_2) \cdot n) = r_1m s_1 \ott r_2ns_2
\end{align}
since both $M$ and $N$ are left $A^e$-modules.

\begin{remark}
Let $\Lambda_n$ be the family of quiver algebras in \eqref{family} and let $\lambda_1,\lambda_2, \beta_1, \beta_2\in \Lambda_n$. Let $\mathbb{Q}_a$ and $\mathbb{Q}_b$ be left $\Lambda_n^e$-modules having basis elements $e_i\ot e_j$ and $e_l\ot e_s$ respectively. Using the module structure of \eqref{mod1} and \eqref{mod2}, we have the following module structure compatibility.

\begin{equation}\label{modulestructure}
    \lambda_1(e_i\ot e_j)\beta_1 \ott\lambda_2(e_l\ot e_s) \beta_2 \simeq \lambda_1\lambda_2(e_i\ot e_j)\ott(e_l\ot e_s)\beta_2\beta_1.
\end{equation}
\end{remark}

\section{Main results}\label{Main results}
We denote the total complex obtained from the tensor product complex $\mathbb{Q}_{\bullet}\ot_{\Lambda_n}\mathbb{Q}_{\bullet}$ by $\mathbb{P}_{\bullet}$, so that $\Delta:\mathbb{Q}_{\bullet}\xrightarrow{}\mathbb{P}_{\bullet}$ is the comultiplicative map lifting the identity map $\Lambda_n\cong\Lambda_n\ott\Lambda_n$. We define $\displaystyle{\mathbb{P}_{m}: =\bigoplus_{a+b=m} \mathbb{Q}_a\ott \mathbb{Q}_b},$
and for $x\ot y\in \mathbb{Q}_a\ott \mathbb{Q}_b \in \mathbb{P}_m$, the differential $d^m:\mathbb{P}_m\rightarrow \mathbb{P}_{m-1}$ is given by
$$d^m(x\ot y)= \partial^{m-1}(x)\ot y + (-1)^{a} x\ot \partial^{m-1}(y)$$
provided $\partial^{m-1}(x)$ or  $\partial^{m-1}(y)$ are non-zero. We seek maps $\Delta_{\bullet}$ for which the following diagrams are commutative. As previously noted, the generating set for the free modules $\mathbb{Q}_{\bullet}$ are the same for modules in degrees $3m, 3m+1$ and $3m+2$ respectively. 
$$\begin{tikzcd}
\mathbb{Q}_{\bullet}: =\arrow{d}{\Delta_{\bullet}} \cdots \arrow{r}{}
    & \mathbb{Q}_{3m+2}\arrow{d}{\Delta_{3m+2}}\arrow{r}{\partial^{6j+2}}
        & \mathbb{Q}_{3m+1} \arrow{d}{\Delta_{3m+1}} \arrow{r}{\partial^{6j+1}}
            & \mathbb{Q}_{3m} \arrow{d}{\Delta_{3m}} \arrow{r}{\partial^{6j}}
                & \cdots \\
\mathbb{P}_{\bullet}:= \cdots \arrow{r}{d^*}
    & \mathbb{P}_{3m+2} \arrow{r}{d^*}
        & \mathbb{P}_{3m+1} \arrow{r}{d^*}
            & \mathbb{P}_{3m} \arrow{r}{d^*}
                & \cdots
\end{tikzcd}$$

Moreso, the differentials occur in mode 6, so the chain complex has the following forms;
$$\cdots \xrightarrow{}\mathbb{Q}_{3m+2}\xrightarrow{\partial^{6j+5}, \partial^{6j+2}}\mathbb{Q}_{3m+1}\xrightarrow{\partial^{6j+4}, \partial^{6j+1}}\mathbb{Q}_{3m}\xrightarrow{\partial^{6j+3}, \partial^{6j}}\mathbb{Q}_{3m-1}\xrightarrow{}\cdots$$
but it is sufficient to define the comultiplicative map in degrees $3m, 3m+1$ and $3m+2$.

In the following Lemma, we define a chain map $\Delta'$ lifting the bimodule homomorphism $\pi:\Lambda_n\rightarrow\Lambda_n\ott\Lambda_n$ given by $x\mapsto 2(x\ot x)$ for every idempotent $x\in\Lambda_n$. Such a chain map exists by fundamental results of homological algebra and the following diagram is commutative in each degree $m$, i.e. $\Delta^{'}_{n,m}\partial^{*}= d^{*}\Delta^{'}_{n,m+1}$
for every algebra $\Lambda_n$.
$$\begin{tikzcd}
\mathbb{Q}_{\bullet}\arrow{d}{\Delta_{n,\bullet}^{'}} \arrow{r}{\partial^{*}}
    & \mathbb{Q}_{\bullet} \arrow{d}{\Delta_{n,\bullet}^{'}}  \arrow{r}{} & \Lambda_n \arrow{d}{\pi}\\
\mathbb{P}_{\bullet} \arrow{r}{d^{*}}
    & \mathbb{P}_{\bullet} \arrow{r}{} & \Lambda_n \ott\Lambda_n\cong \Lambda_n
\end{tikzcd}$$

\begin{lemma}\label{deltap-chain_map}
Let $\Lambda_n = \kk Q/I_n$, ($char(\kk)\neq 2$) be the family of quiver algebras of \eqref{family} and let $\mathcal{G}^i$ be a set of uniform relations described by uniform-paths. Let $w,w_i,w_j$ be any idempotent in the set $\{e_0,e_1,f_1,e_2\}.$ A chain map 
$\displaystyle{\Delta^{'}_{n,\bullet}: \mathbb{Q}_{\bullet} \rightarrow \mathbb{Q}_{\bullet}\ott \mathbb{Q}_{\bullet}},$ lifting the map $\pi:\Lambda_n\rightarrow\Lambda_n$ given by $x\mapsto 2x$ for every $x\in\Lambda_n$ can be given by
\begin{align*}
\Delta^{'}_{n,\ast}(w_i\ot w_j) &= (w_i\ot w_i) \bar{\ot} (w_i\ot w_j) + (w_i\ot w_j) \bar{\ot} (w_j\ot w_j) \\
& + (w_i\ot w_j) \bar{\ot} (w_i\ot w_i) + (w_j\ot w_j) \bar{\ot} (w_i\ot w_j) \\
& + \sum_{w\neq w_i,w_j} (w\ot w) \bar{\ot} (w_i\ot w_j) + (w_i\ot w_j) \bar{\ot} (w\ot w) \qquad \text{and}\\
\Delta^{'}_{n,\ast}(w_i\ot w_i) &= (w_i\ot w_i) \bar{\ot} (w_i\ot w_i) + (w_i\ot w_i) \bar{\ot} (w_i\ot w_i) \\
& + \sum_{w\neq w_i} (w\ot w) \bar{\ot} (w_i\ot w_i) + (w_i\ot w_i) \bar{\ot} (w\ot w)
\end{align*}
where $\bar{\ot} =\ot_{\Lambda_n}$.
\end{lemma}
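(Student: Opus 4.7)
The plan is to verify two properties of the given formulas: that $\Delta^{'}_{n,\ast}$ extends to a well-defined $\Lambda_n^e$-bimodule chain map $\mathbb{Q}_{\bullet} \to \mathbb{P}_{\bullet}$, and that it lifts the map $\pi \colon x \mapsto 2x$ through the augmentations $\mathbb{Q}_{\bullet} \to \Lambda_n$ and $\mathbb{P}_{\bullet} \to \Lambda_n \ott \Lambda_n \cong \Lambda_n$. Each $\mathbb{Q}_m$ is a free $\Lambda_n^e$-bimodule on the idempotent tensors $\{\mfo(g) \ot \mft(g) : g \in \mathcal{G}^m\}$, so the specified values on these generators extend uniquely by bimodule linearity. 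The lifting condition is an immediate degree-zero check: the augmentation $\mu \ott \mu$ sends $(w \ot w) \ott (w_i \ot w_i)$ to $w \cdot w_i = \delta_{w, w_i}\, w_i$ by orthogonality of the idempotents, so all cross terms in the formula for $\Delta^{'}_{n,0}(w_i \ot w_i)$ vanish, and the two explicit copies of $(w_i \ot w_i) \ott (w_i \ot w_i)$ produce $2 w_i = \pi(w_i)$, as required.

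The heart of the proof is verifying the chain map identity $\Delta^{'}_{n,m} \circ \partial^{*} = d^{*} \circ \Delta^{'}_{n,m+1}$ on each bimodule generator $g = w_i \ot w_j$ of $\mathbb{Q}_{m+1}$. Using Definition \ref{differentials}, one writes $\partial^{*}(g) = \sum_k \lambda_k g_k \rho_k$ as a finite sum of bimodule generators $g_k \in \mathcal{G}^m$ acted on by paths $\lambda_k, \rho_k \in \Lambda_n$. Applying $\Delta^{'}_{n,m}$ by bimodule linearity produces
$$\Delta^{'}_{n,m}(\partial^{*} g) = \sum_k \lambda_k \, \Delta^{'}_{n,m}(g_k) \, \rho_k,$$
which expands into a sum of terms of the form $\lambda_k (w \ot w) \ott g_k \rho_k$ and $\lambda_k g_k \ott (w \ot w) \rho_k$. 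Simultaneously, $d^{*}\Delta^{'}_{n,m+1}(g)$ is obtained by applying the Leibniz-type formula $d^{*}(x \ott y) = \partial^{*}(x) \ott y + (-1)^{a} x \ott \partial^{*}(y)$ for $x \in \mathbb{Q}_a$ to each summand in the lemma's expression for $\Delta^{'}_{n,m+1}(g)$; since the factors $(w \ot w) \in \mathbb{Q}_0$ have trivial boundary in the non-augmented complex, $d^{*}$ differentiates only the $g$-slot of each summand. The two resulting expressions are matched by repeated application of the module compatibility identity \eqref{modulestructure}, which lets one absorb idempotents into adjacent paths and slide paths across the middle $\ott$; each sum $\sum_w \lambda_k (w \ot w) \ott g_k \rho_k$ collapses to the contribution from the unique $w$ equal to the terminal vertex of $\lambda_k$, and then rebalances into $(w \ot w) \ott \lambda_k g_k \rho_k$.

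The main obstacle is the volume of case work. The differentials come in six flavours $\partial^{6j+r}$ for $r = 0, 1, \ldots, 5$; in particular, the periodic maps $\partial^{6j}$ and $\partial^{6j+3}$ involve telescoping sums of monomials of length up to $3n+2$ coming from the relation $(a_0 a_1 a_2)^n a_0 a_1 - b_0 b_1 \in I_n$ that links the upper and lower triangles of $\mathcal{Q}$. The verification will be organized by the type of bimodule generator ($R$, $S$, $T$, or $U$) and carried out in turn for $\mathbb{Q}_{3m}$, $\mathbb{Q}_{3m+1}$, and $\mathbb{Q}_{3m+2}$. Because the formula for $\Delta^{'}_{n,\ast}$ depends only on the idempotents and is identical in every degree, the six-periodic structure of the differentials does not introduce any essentially new case beyond what appears in low degrees, and the identity extends to higher degrees by analogous computation. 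The sums over $w$ in the lemma's formula play a crucial role: after the compatibility identities collect equivalent terms, they produce precisely the $(w \ot w)$-factors on the boundary that the Leibniz rule demands on both sides of the chain-map equation.
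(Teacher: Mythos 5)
Your proposal is correct and follows essentially the same route as the paper: verify the chain-map identity on the idempotent bimodule generators degree by degree, expand $\partial^{*}(g)$ by bimodule linearity, apply the Leibniz rule for $d^{*}$ noting that the $\mathbb{Q}_0$-factors $(w\ot w)$ have zero boundary in the truncated complex, and regroup terms via the compatibility identity \eqref{modulestructure}; the degree-zero check via orthogonality of idempotents matches the paper's Case (i). Like the paper, you carry out representative cases and defer the remaining (structurally identical) casework to analogous computation.
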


\begin{proof}
Since the generating set for modules in the resolution occurs periodically, we will prove that the result holds in the case $m=0$. That is, we show commutativity of the first, second and third squares in the following diagram. $$\begin{tikzcd}
 \cdots \arrow{r}{\partial^{3}}
    & \mathbb{Q}_{2}\arrow{d}{\Delta^{'}_{n,2}}\arrow{r}{\partial^{2}}
        & \mathbb{Q}_{1} \arrow{d}{\Delta^{'}_{n,1}} \arrow{r}{\partial^{1}}
         & \mathbb{Q}_{0} \arrow{d}{\Delta^{'}_{n,0}} \arrow{r}{\partial^{0}}
            & \Lambda_n \arrow{d}{\pi} \\
\cdots \arrow{r}{d^*}
    & (\mathbb{Q}_2\ott \mathbb{Q}_0)\oplus(\mathbb{Q}_1\ott\mathbb{Q}_1) \oplus(\mathbb{Q}_0\ott\mathbb{Q}_2) \arrow{r}{d^2}
        & \stackrel{\stackrel{(\mathbb{Q}_1\ott \mathbb{Q}_0)}{\oplus}}{ (\mathbb{Q}_0\ott \mathbb{Q}_1)} \arrow{r}{d^1}
            & (\mathbb{Q}_0\ott\mathbb{Q}_0) \arrow{r}{\partial^0\ot\partial^0}
               &\Lambda_n\ott \Lambda_n &
\end{tikzcd}$$
To prove that the diagram is commutative in the first three squares, we will show that (i) $\pi\partial^{0} = (\partial^0\ot\partial^0)\Delta^{'}_{n,0}$, (ii) $\Delta^{'}_{n,0}\partial^{1} = d^{1}\Delta^{'}_{n,1}$ and (iii) $\Delta^{'}_{n,1}\partial^{2} = d^{2}\Delta^{'}_{n,2}.$ \\
\noindent \textbf{Case (i):}  We now show for basis element $e_0\ot e_0$ that $\pi\partial^{0}(e_0\ot e_0) = (\partial^0\ot\partial^0)\Delta^{'}_{n,0}(e_0\ot e_0)$ and a similar proof can be applied to other basis elements of $\mathbb{Q}_0.$ The left hand side of the expression becomes $\pi\partial^{0}(e_0\ot e_0) = \pi(e_0^2)= \pi(e_0)= 2(e_0\ot e_0)=2e_0$ after applying multiplication. Since $e_0$ is an idempotent. The right hand side becomes
\begin{align*}
&(\partial^0\ot\partial^0)\Delta^{'}_{n,0}(e_0\ot e_0) = \partial^0\ot\partial^0 \big(e_0\ot e_0\bar{\ot}e_0\ot e_0 + e_0\ot e_0\bar{\ot}e_0\ot e_0 \\
    &+ e_1\ot e_1\bar{\ot}e_0\ot e_0 + e_0\ot e_0\bar{\ot}e_1\ot e_1 + e_2\ot e_2 \bar{\ot}e_0\ot e_0 + e_0\ot e_0 \bar{\ot} e_2\ot e_2 \\
    &+ f_1\ot f_1\bar{\ot}e_0 \ot e_0 + e_0\ot e_0 \bar{\ot}f_1\ot f_1\big)\\
    &=e_0\ott e_0 + e_0\ott e_0 + e_1\ott e_0 + e_0\ott e_1 + e_2\ott e_0 + e_0\ott e_2 +f_1\ott e_0 + e_0\ott f_1\\
    &= e_0 + e_0 =2e_0 = \pi\partial^{0}(e_0\ot e_0).
\end{align*}
\noindent \textbf{Case (ii):}
We will show that the diagram is commutative for the case $\partial^* = \partial^{6j+1}$ where $j=0$, and the computation is similar for $j\geq 0$. We show for basis element $e_0\ot e_1$ that $\Delta^{'}_{n,0}\partial^{1}(e_0\ot e_1) = d^{1}\Delta^{'}_{n,1}(e_0\ot e_1)$
and a similar proof can be applied to other basis elements of $\mathbb{Q}_1.$  Using $d^1=\partial\ot 1 +(-1)^\ast 1\ot \partial$, we have
\begin{multline*}
d^{1}\Delta^{'}_{n,1}(e_0\ot e_1)=d^{1}(e_2\ot e_2 \ott e_0\ot e_1 +e_0\ot e_1\ott e_2\ot e_2+ e_0\ot e_0\ott e_0\ot e_1+ e_0\ot e_1\ott e_0\ot e_0 \\
+ e_0\ot e_1\ott e_1\ot e_1 + e_1\ot e_1\ott e_0\ot e_1+ e_0\ot e_1\ott f_1 \ot f_1+ f_1\ot f_1\ott e_0\ot e_1 )\\
=(\partial\ot 1 +(-1)^\ast 1\ot \partial) [e_2\ot e_2\ott e_0\ot e_1 + e_0\ot e_1 \ott e_2\ot e_2+ \\ e_0\ot e_0\ott e_0\ot e_1+ e_0\ot e_1\ott e_0\ot e_0+ 
e_0\ot e_1\ott e_1\ot e_1+ e_1\ot e_1\ott e_0\ot e_1 \\
+ e_0\ot e_1\ott f_1 \ot f_1+ f_1\ot f_1\ott e_0\ot e_1]\\=
\partial(e_2\ot e_2)\ott e_0\ot e_1 + e_2\ot e_2\ott \partial (e_0\ot e_1)+\\
\partial(e_0\ot e_1)\ott e_2\ot e_2 -e_0\ot e_1\ott \partial(e_2\ot e_2)+ \\\partial(e_0\ot e_0)\ott e_0\ot e_1+e_0\ot e_0\ott \partial(e_0\ot e_1)+\\\partial(e_0\ot e_1)\ott e_0\ot e_0 -e_0\ot e_1\ott \partial(e_0\ot e_0) +
\\ \partial(e_0\ot e_1)\ott e_1\ot e_1 -e_0\ot e_1\ott \partial(e_1\ot e_1) +\\ \partial(e_1\ot e_1)\ot e_0\ot e_1 +e_1\ot e_1\ott \partial(e_0\ot e_1)+\\ \partial(e_0\ot e_1)\ott f_1\ot f_1 -e_0\ot e_1\ott \partial(f_1\ot f_1)+\\ \partial(f_1\ot f_1)\ott e_0\ot e_1 + f_1\ot f_1\ott \partial(e_0\ot e_1)
\end{multline*}
\begin{multline*}
=0+ e_2\ot e_2\ott[e_0\ot a_0 -a_0\ot e_1]+ [e_0\ot a_0 -a_0 \ot e_1]\ott e_2\ot e_2 - 0 + 0+ e_0\ot e_0\ott [e_0\ot a_0 - a_0\ot e_1] +\\
[e_0\ot a_0 -a_0\ot e_1]\ott e_0\ot e_0- 0+\\
[e_0\ot a_0 -a_0\ot e_1]\ott e_1\ot e_1+0+\\
0+e_1\ot e_1\ott [e_0\ot a_0 -a_0\ot e_1]+\\
[e_0\ot a_0 -a_0\ot e_1]\ott f_1\ot f_1-0+\\
0+f_1\ot f_1\ott [e_0\ot a_0 -a_0\ot e_1]
\end{multline*}
\begin{multline*}
=(e_2\ot e_2\ott e_0\ot e_0 + e_0\ot e_0\ott e_2\ot e_2 + e_0\ot e_0 \ott e_0\ot e_0+e_0\ot e_0\ott e_0\ot e_0 + e_0\ot e_0\ott e_1\ot e_1\\
+ e_1\ot e_1\ott e_0\ot e_0+ e_0\ot e_0\ott f_1\ot f_1 + f_1\ot f_1\ott e_0\ot e_0) a_0-a_0(e_2\ot e_2\ott e_1\ot e_1 +e_1\ot e_1\ott e_2\ot e_2 \\+e_0\ot e_0\ott e_1\ot e_1+e_1\ot e_1\ott e_0\ot e_0 +e_1\ot e_1\ott e_1\ot e_1+ \\e_1\ot e_1\ott e_1\ot e_1+e_1\ot e_1\ott f_1\ot f_1 + f_1\ot f_1\ott e_1\ot e_1)
\end{multline*}
On the other hand, we have
\begin{multline*}
    \Delta^{'}_{n,0}\partial^1(e_0\ot e_1)= \Delta^{'}_{n,0}(e_0\ot a_0-a_0\ot e_1)=\Delta^{'}_{n,0}(e_0\ot e_0)a_0-a_0\Delta^{'}_{n,0}(e_1\ot e_1)\\
    =(e_0\ot e_0\bar{\ot} e_0\ot e_0 + e_0\ot e_0\bar{\ot} e_0\ot e_0+e_1\ot e_1\bar{\ot}e_0\ot e_0+e_0\ot e_0\bar{\ot}e_1\ot e_1 +\\ e_2\ot e_2 \bar{\ot}e_0\ot e_0+ e_0\ot e_0 \bar{\ot} e_2\ot e_2 + f_1\ot f_1\bar{\ot}e_0 \ot e_0 + e_0\ot e_0 \bar{\ot}f_1\ot f_1)a_0 \\
    -a_0(e_1\ot e_1 \bar{\ot} e_1\ot e_1 + e_1\ot e_1 \bar{\ot} e_1\ot e_1 + e_0\ot e_0\bar{\ot}e_1\ot e_1 + e_1\ot e_1\bar{\ot}e_0\ot e_0 + e_2\ot e_2 \bar{\ot}e_1\ot e_1 \\
    +e_1\ot e_1\bar{\ot}e_2\ot e_2+ f_1\ot f_1\bar{\ot}e_1\ot e_1 +e_1\ot e_1\bar{\ot}f_1\ot f_1) = d^{1}\Delta^{'}_{n,1}(e_0\ot e_1)
\end{multline*}

\noindent\textbf{Case (iii):} We now show for basis element $e_0\ot e_2$ that $$\Delta^{'}_{n,1}\partial^{6j+2}(e_0\ot e_2) = d^{*}\Delta^{'}_{n,2}(e_0\ot e_2)$$ and a similar proof can be applied to other basis elements of $\mathbb{Q}_2.$ Starting with the left hand side and using the definition of $\partial^{6j+2}$, we have
\begin{multline*}
\Delta^{'}_{n,1}\partial^{6j+2}(e_0\ot e_2) = \Delta^{'}_{n,1}\Big(e_0\ot e_1(a_1a_2a_0)^na_1+\sum_{k=0}^{3n-1}a_0a_1a_2\dots a_k\ot a_{k+2}\dots a_{3n+1} \\+(a_0a_1a_2)^na_0e_1\ot e_2-e_0\ot f_1 b_1-b_0f_1\ot e_2\Big)
\end{multline*}
and expanding the summation gives
\begin{multline*}
    =\Delta^{'}_{n,1} \Big ( e_0\ot e_1 (a_1a_2a_0)^na_1+ a_0e_1\ot e_2 a_2 a_3\dots a_{3n+1} + a_0a_1e_2\ot e_0 a_0 a_4\dots a_{3n+1}+ \\ a_0a_1a_2 e_0\ot e_1 a_1 a_5\dots a_{3n+1} + a_0a_1a_2a_0 e_1\ot e_2 a_2 a_6\dots a_{3n+1}+ \dot a_0a_1\dots a_{3n-1}\ot a_{3n+1} + \\ (a_0a_1a_2)^na_0 e_1\ot e_2 -e_0\ot f_1 b_1-b_0 f_1\ot e_2 \Big )\\
    =\Delta^{'}_{n,1}\Big ( 1 (e_0\ot e_1)(a_1a_2a_0)^na_1 +\sum_{t=1}^n (a_0a_1a_2)^t (e_0\ot e_1)(a_1a_2a_0)^{n-t}a_1\\ + a_0(e_1\ot e_2)(a_2a_0a_1)^n + \sum_{t=1}^n a_0(a_1a_2a_0)^t e_1\ot e_2 (a_2a_0a_1)^{n-t} + a_0a_1(e_2\ot e_0)(a_0a_1a_2)^{n-1}a_0a_1 \\ +\sum_{t=1}^{n}a_0a_1(a_2a_0a_1)^te_e\ot e_0 (a_0a_1a_2)^{n-t-1}a_0a_1-1(e_0\ot f_1)b_1-b_0(f_1\ot e_2)1\Big).
\end{multline*}
After using the definition $\Delta^{'}_{n,1}(x(e_i\ot e_j)y)=x(\Delta^{'}_{n,1}(e_i\ot e_j))y$ for every $x,y\in\Lambda_n$, we get
\begin{multline}\label{2.8}
\sum_{t=0}^n (a_0a_1a_2)^t\Big [\stackrel{(1)}{e_0\ot e_0 \bar{\ot} e_0\ot e_1} + \stackrel{(7)}{e_0\ot e_1\bar{\ot}e_1\ot e_1} + \stackrel{(6)}{e_2\ot e_2 \bar{\ot} e_0\ot e_1}\color{black} \\+ \stackrel{(4)}{f_1\ot f_1 \bar{\ot}e_0\ot e_1}\color{black} +  \stackrel{(5)}{e_0\ot e_1 \bar{\ot}e_0\ot e_0}\color{black} + \stackrel{(3)}{e_1\ot e_1\bar{\ot}e_0\ot e_1}\color{black}\\+ \stackrel{(2)}{e_0\ot e_1\bar{\ot}e_2\ot e_2} \color{black}+ \stackrel{(8)}{e_0\ot e_1\bar{\ot}f_1\ot f_1}\Big ] (a_1a_2a_0)^{n-t}a_1
\end{multline}
$$+$$
\begin{multline}\label{2.9}
    \sum_{t=0}^n a_0(a_1a_2a_0)^t \Big [ \stackrel{(3)}{e_1\ot e_1 \bar{\ot} e_1\ot e_2} \color{black}+ \stackrel{(2)}{e_1\ot e_2 \bar{\ot} e_2\ot e_2} \color{black}+ \stackrel{(1)}{e_0\ot e_0\bar{\ot} e_1\ot e_2} \color{black}+\\ \stackrel{(4)}{f_1\ot f_1\bar{\ot} e_1\ot e_2} \color{black} + \stackrel{(7)}{e_1\ot e_2 \bar{\ot} e_1\ot e_1} \color{black} + \stackrel{(6)}{e_2\ot e_2 \ot e_1\ot e_2}\color{black} \\+ \stackrel{(5)}{e_1\ot e_2 \bar{\ot} e_0\ot e_0} \color{black}+ \stackrel{(8)}{e_1\ot e_2 \bar{\ot} f_1\ot f_1} \Big ] (a_2a_0a_1)^{n-t}
\end{multline}
$$+$$
\begin{multline}\label{2.10}
    \sum_{t=0}^{n}a_0a_1(a_2a_0a_1)^t \Big [ \stackrel{(6)}{e_2\ot e_2 \bar{\ot} e_2\ot e_0} \color{black} + \stackrel{(5)}{e_2\ot e_0 \bar{\ot} e_0\ot e_0}\color{black} + \stackrel{(3)}{e_1\ot e_1 \bar{\ot} e_2\ot e_0} \color{black} +\\ \stackrel{(4)}{f_1\ot f_1\bar{\ot} e_2\ot e_0} \color{black}  + \stackrel{(2)}{e_2\ot e_0 \bar{\ot} e_2\ot e_2} \color{black}+ \stackrel{(1)}{e_0\ot e_0 \bar{\ot} e_2\ot e_0}\color{black}\\+ \stackrel{(7)}{e_2\ot e_0 \bar{\ot} e_1\ot e_1}\color{black} + \stackrel{(8)}{e_2\ot e_0\bar{\ot} f_1\ot f_1}\Big ](a_0a_1a_2)^{n-t-1}a_0a_1
\end{multline}
$$+$$
\begin{multline}\label{2.11}
    -\Big [\stackrel{(1)}{e_0\ot e_0 \bar{\ot} e_0\ot f_1} \color{black} +\stackrel{(8)}{e_0\ot f_1 \bar{\ot} f_1\ot f_1} + \stackrel{(3)}{e_1\ot e_1 \bar{\ot}e_0\ot f_1} \color{black} \\+ \stackrel{(6)}{e_2\ot e_2\bar{\ot} e_0\ot f_1}\color{black} + \stackrel{(5)}{e_0\ot f_1\bar{\ot} e_0\ot e_0} \color{black}\\ + \stackrel{(4)}{f_1 \ot f_1 \bar{\ot} e_0\ot f_1} \color{black}+ \stackrel{(7)}{e_0\ot f_1 \bar{\ot} e_1\ot e_1}\color{black} + \stackrel{(2)}{e_0\ot f_1 \bar{\ot} e_2\ot e_2}\color{black}\Big ] b_1
    \end{multline}
    \begin{center}
        $+$
    \end{center}
    \begin{multline}\label{2.12}
    -b_0 \Big [ \stackrel{(4)}{f_1\ot f_1 \bar{\ot} f_1\ot e_2} \color{black}+ \stackrel{(2)}{f_1 \ot e_2 \bar{\ot} e_2\ot e_2} \color{black}+ \stackrel{(1)}{e_0 \ot e_0 \bar{\ot}f_1\ot e_2} \color{black} \\+ \stackrel{(3)}{e_1\ot e_1 \bar{\ot} f_1 \ot e_2}\color{black} +\stackrel{(8)}{f_1\ot e_2 \bar{\ot} f_1\ot f_1} \\ + \stackrel{(6)}{e_2\ot e_2 \bar{\ot} f_1 \ot e_2} \color{black}+ \stackrel{(5)}{f_1 \ot e_2 \bar{\ot} e_0\ot e_0} \color{black}+ \stackrel{(7)}{f_1\ot e_2 \bar{\ot} e_1 \ot e_1} \color{black}\Big]
\end{multline}

From Equations \eqref{2.8}, \eqref{2.9}, \eqref{2.10}, \eqref{2.11}, and \eqref{2.12}, all expressions labelled $\stackrel{(1)}{}$ \color{black} yields
\begin{multline}
    =\sum_{t=0}^{n} (a_0a_1a_2)^t (e_0\ot e_0 \ott e_0 \ot e_1) (a_1a_2a_0)^{n-t} a_1 \\+ \sum_{t=0}^{n} a_0 (a_1a_2a_0)^t (e_0\ot e_0 \bar{\ot} e_1\ot e_2) (a_2a_0a_1)^{n-t} \\+ \sum_{t=0}^n a_0a_1 (a_2a_0a_1)^t(e_0\ot e_0 \bar{\ot} e_2\ot e_0)(a_0a_1a_2)^{n-t-1}a_0a_1\\-e_0\ot e_0\bar{\ot}e_0\ot f_1 b_1 - b_0 e_0\ot e_0 \bar{\ot} f_1 \ot e_2.
\end{multline}

Applying the module structure as defined in Equation \eqref{modulestructure}, we obtain

\begin{multline}
    =e_0\ot e_0 \bar{\ot} \Big (\sum_{t=0}^n (a_0a_1a_2)^te_0\ot e_1 (a_1 a_2 a_0)^{n-t}a_1  \Big ) \\
    + e_0\ot e_0 \bar{\ot} \Big ( \sum_{t=0}^n a_0(a_1a_2a_0)^t e_1\ot e_2 (a_2 a_0 a_1)^{n-t} \Big) \\ +e_0\ot e_0 \bar{\ot} \Big (\sum_{t=0}^n a_0a_1 (a_2a_oa_1)^te_2\ot e_0 (a_0a_1a_2)^{n-t-1}a_0a_1\Big )-e_0\ot e_0 \bar{\ot} \Big ( e_0\ot f_1 b_1 \Big )\\ -e_0\ot e_0 \bar{\ot} \Big (b_0 f_1\ot e_2\Big )
\end{multline}

\begin{multline}
    =e_0\ot e_0 \bar{\ot} \Big [ \Big ( \sum_{t=0}^n (a_0a_1a_2)^t e_0\ot e_1 (a_1a_2 a_0)^{n-t}a_1 \Big ) \\
    + \Big ( \sum_{t=0}^n a_0(a_1a_2a_0)^te_1\ot e_2 (a_2 a_0 a_1)^{n-t}\Big ) + \Big ( \sum_{t=0}^{n} a_0a_1(a_2a_0a_1)^t e_2\ot e_0(a_0a_1a_2)^{n-t-1}a_0a_1 \Big )\\ -\Big ( e_0\ot f_1 b_1 \Big ) -\Big( b_0 f_1\ot e_2 \Big ) \Big ]
\end{multline}

\begin{equation}
    =e_0\ot e_0\bar{\ot} \partial^{6j+2}(e_0\ot e_2)
\end{equation}

    \noindent Similarly, all expressions labelled $\stackrel{(2)}{}$ \color{black} yields $\partial^{6j+2}(e_0\ot e_2)\bar{\ot}e_2\ot e_2$.

    \noindent All expression labelled $\stackrel{(3)}{}$ \color{black} yields $e_1\ot e_1 \bar{\ot}\partial^{6j+2}(e_0\ot e_2)$.

    \noindent All expression labelled $\stackrel{(4)}{}$ \color{black} yields $f_1\ot f_1 \bar{\ot} \partial^{6j+2}(e_0\ot e_2)$.

    \noindent All expression labelled $\stackrel{(5)}{}$ \color{black} yields $\partial^{6j+2}(e_0\ot e_2)\bar{\ot}(e_0\ot e_0).$

    \noindent All expression labelled $\stackrel{(6)}{}$ \color{black} yields $e_2\ot e_2\bar{\ot}\partial^{6j+2}(e_0\ot e_2).$

    \noindent All expression labelled $\stackrel{(7)}{}$ \color{black} yields $\partial^{6j+2}(e_0\ot e_2)\bar{\ot}e_1\ot e_1$.

    \noindent and all expressions labelled $\stackrel{(8)}{}$ yields $\partial^{6j+2}(e_0\ot e_2)\bar{\ot}f_1\ot f_1$.
\medskip
Finally, $\Delta_{n,1}\partial^{6j+2}(e_0\ot e_2)$ is equal to
\begin{multline*}
e_0\ot e_0 \bar{\ot} (\partial^{6j+2}(e_0\ot e_2))+ \partial^{6j+2}(e_0\ot e_2)\bar{\ot}e_2\ot e_2 + e_1\ot e_1 \ott \partial^{6j+2}(e_0\ot e_2) + \\ f_1\ot f_1 \bar{\ot} \partial^{6j+2}(e_0\ot e_2)+ \partial^{6j+2}(e_0\ot e_2)\bar{\ot} e_0\ot e_0 +  e_2\ot e_2 \bar{\ot}\partial^{6j+2}(e_0\ot e_2) \\
+ \partial^{6j+2}(e_0\ot e_2)\bar{\ot} e_1\ot e_1 +\partial^{6j+2}(e_0\ot e_2)\bar{\ot} f_1\ot f_1.
\end{multline*}

On the other hand, \begin{multline}
    d^{*}\Delta'_{n,2}(e_0\ot e_2)= d^{6j+2}[e_0\ot e_0 \bar{\ot}e_0\ot e_2+ e_0\ot e_2 \bar{\ot}e_2\ot e_2 + e_1\ot e_1 \bar{\ot}e_0\ot e_2 \\+ f_1\ot f_1\bar{\ot}e_0\ot e_2 + e_0\ot e_2\bar{\ot} e_0\ot e_0 + e_2\ot e_2 \bar{\ot} e_0\ot e_2 \\+ e_0\ot e_2 \ott e_1\ot e_1 + e_0\ot e_2 \bar{\ot} f_1\ot f_1],
\end{multline}
\medskip
using $d^{*}=\partial \ot 1 + (-1)^\ast 1\ot \partial$, we would obtain \begin{multline}
e_0\ot e_0 \bar{\ot} (\partial^{6j+2}(e_0\ot e_2))+ \partial^{6j+2}(e_0\ot e_2)\bar{\ot}e_2\ot e_2 + e_1\ot e_1 \ott \partial^{6j+2}(e_0\ot e_2) + \\ f_1\ot f_1 \bar{\ot} \partial^{6j+2}(e_0\ot e_2)+ \partial^{6j+2}(e_0\ot e_2)\bar{\ot} e_0\ot e_0 + \\ e_2\ot e_2 \bar{\ot}\partial^{6j+2}(e_0\ot e_2)+ \partial^{6j+2}(e_0\ot e_2)\bar{\ot} e_1\ot e_1 +\partial^{6j+2}(e_0\ot e_2)\bar{\ot} f_1\ot f_1.
\end{multline}
Therefore, for all natural number $n$, we have 
\begin{equation}
    \Delta^{'}_{n,1}\partial^{6j+2}(e_0\ot e_2)= d^{6j+2}\Delta^{'}_{n,2}(e_0\ot e_2).
\end{equation}
Since it is straightforward to make similar calculations for subsequent squares on all basis elements $e_i\ot e_j$, we conclude that 
$$\Delta^{'}_{n,m}\partial^{*}(e_i\ot e_j)= d^{*}\Delta^{'}_{n,m+1}(e_i\ot e_j)$$
for every $n$ and $m$.
\end{proof}

The comultiplicative map $\displaystyle{\Delta_{n,\bullet}: \mathbb{Q}_{\bullet} \rightarrow \mathbb{Q}_{\bullet}\ott \mathbb{Q}_{\bullet}}$ that we want is a chain map lifting the identity map
$\mathbf{1}:\Lambda_n\rightarrow\Lambda_n\ott\Lambda_n\cong \Lambda_n$. Since both $\Delta^{'}_{n,\bullet}$ and $\Delta_{n,\bullet}$ are chain maps in the category of chain complexes of $\Lambda_n$, they are homotopic if and only if there is a chain homotopy $\hh^{\bullet}:\mathbb{Q}_{\bullet}\rightarrow \mathbb{P}_{\bullet+1}$ such that
\begin{equation}\label{homotopy-equation}
\Delta_{n,\bullet} - \Delta^{'}_{n,\bullet} = \hh^{\bullet-1}\partial^{\bullet} + d^{\bullet+1}\hh^{\bullet}
\end{equation}
Such a chain homotopy ensures that $\Delta^{'}_{n,\bullet}$ and $\Delta_{n,\bullet}$ induces the same map at the cohomology level. Moreover, there is some degree of freedom in how the chain homotopy can be defined once we have defined $\Delta^{'}_{n,\bullet}$.

\begin{theorem}\label{comultiplicative_map}
Let $\Lambda_n = \kk Q/I_n$, ($char(\kk)\neq 2$) be the family of quiver algebras of \eqref{family}. A comultiplicative map 
$\displaystyle{\Delta_{n,\bullet}: \mathbb{Q}_{\bullet} \rightarrow \mathbb{Q}_{\bullet}\ott \mathbb{Q}_{\bullet}},$ lifting the identity map $\mathbf{1}:\Lambda_n\rightarrow\Lambda_n\ott\Lambda_n\cong \Lambda_n$
on the projective bimodule resolution $\mathbb{Q}_{\bullet}$ of $\Lambda_n$  is given by
$$\Delta_{n,\bullet}  = \Delta^{'}_{n,\bullet} + \hh^{\bullet-1}\partial^{\bullet} + d^{\bullet+1}\hh^{\bullet}$$
where $\Delta^{'}_{n,\bullet}$ is the chain map lifting multiplication by 2 and $\hh^{\bullet}$ is a chain homotopy.
\end{theorem}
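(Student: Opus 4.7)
The plan is to split the verification into two independent pieces: first, that the stated formula automatically defines a chain map for any graded map $\hh^{\bullet}$; and second, that $\hh^{\bullet}$ can be chosen so that the resulting chain map lifts the identity on $\Lambda_n$. I expect the first to be a formal one-line check, and the real content to lie in the second.

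For the chain-map property, I would compute $d^{\bullet}\circ\Delta_{n,\bullet}$ and $\Delta_{n,\bullet-1}\circ\partial^{\bullet}$ directly from $\Delta_{n,\bullet} = \Delta^{'}_{n,\bullet} + \hh^{\bullet-1}\partial^{\bullet} + d^{\bullet+1}\hh^{\bullet}$. Using $d^{2}=0$, $\partial^{2}=0$, and the chain-map identity $d\Delta^{'}=\Delta^{'}\partial$ supplied by Lemma \ref{deltap-chain_map}, both composites collapse to $\Delta^{'}\partial + d\hh\partial$. This verification is purely formal and independent of the particular $\hh^{\bullet}$ chosen.

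For the identity-lift property, I would first note that $\Delta^{'}_{n,\bullet}$ lifts $\pi:x\mapsto 2x$, while $\Delta_{n,\bullet}$ is required to lift $\mathbf{1}:x\mapsto x$, so the homotopy relation \eqref{homotopy-equation} must absorb the discrepancy of multiplication-by-$(-1)$. I would do this by interpreting \eqref{homotopy-equation} in degree zero with an auxiliary component $\hh^{-1}:\Lambda_n\to\mathbb{P}_0$ satisfying $(\partial^{0}\ot\partial^{0})\circ\hh^{-1}=-\mathbf{1}$; such a map exists because $\mathbb{P}_0$ is projective and the augmentation is surjective, and the hypothesis $\mathrm{char}(\kk)\neq 2$ keeps the rescaling between $\pi$ and $\mathbf{1}$ well-behaved. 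With $\hh^{-1}$ fixed, I would build $\hh^{0},\hh^{1},\ldots$ inductively by the classical Comparison Theorem: at stage $m$, the residue $\Delta_{n,m}-\Delta^{'}_{n,m}-\hh^{m-1}\partial^{m}$ lies in $\Ker(d^{m})=\Img(d^{m+1})$ by exactness of $\mathbb{P}_{\bullet}$ in positive degrees, so projectivity of $\mathbb{Q}_m$ produces a lift $\hh^{m}:\mathbb{Q}_m\to\mathbb{P}_{m+1}$. The underlying existence of a chain map $\Delta_{n,\bullet}$ lifting $\mathbf{1}$ is itself a consequence of the Comparison Theorem applied to the projective resolutions $\mathbb{Q}_{\bullet}$ and $\mathbb{Q}_{\bullet}\ott\mathbb{Q}_{\bullet}$ of the bimodule $\Lambda_n$.

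The main obstacle, and the reason the abstract construction is not the end of the story, is producing $\hh^{\bullet}$ in closed form on the bimodule generators of $\mathbb{Q}_{\bullet}$ enumerated in Theorem \ref{major-res}; applications to computing cup products in $\HH^{*}(\Lambda_n)$ demand explicit formulas rather than abstract existence. The concrete choices referenced at Equation \eqref{homotopy-equation}, specified on idempotent generators and extended by $\Lambda_n$-bimodule linearity via the compatibility relation \eqref{modulestructure}, handle this step and complete the proof.
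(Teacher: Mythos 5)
Your first step---checking that $\Delta_{n,\bullet} = \Delta^{'}_{n,\bullet} + \hh^{\bullet-1}\partial^{\bullet} + d^{\bullet+1}\hh^{\bullet}$ is automatically a chain map, using only $\partial\partial=0$, $dd=0$ and the identity $d\Delta^{'}=\Delta^{'}\partial$ from Lemma \ref{deltap-chain_map}---is exactly the paper's proof, which in fact stops there and never returns to the claim that $\Delta_{n,\bullet}$ lifts the identity.

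The gap is in your second step. To lift $-\mathbf{1}:\Lambda_n\to\Lambda_n$ through the augmentation $\partial^0\ot\partial^0:\mathbb{P}_0\to\Lambda_n\ott\Lambda_n\cong\Lambda_n$ you would need the \emph{source} $\Lambda_n$ to be projective as a $\Lambda_n^e$-module, not the target $\mathbb{P}_0$; surjectivity of the augmentation together with projectivity of $\mathbb{P}_0$ gives you nothing here. In fact no bimodule map $\hh^{-1}:\Lambda_n\to\mathbb{P}_0$ with $(\partial^{0}\ot\partial^{0})\circ\hh^{-1}=-\mathbf{1}$ can exist: $-\hh^{-1}$ would split the augmentation, exhibiting $\Lambda_n$ as a direct summand of the projective bimodule $\mathbb{P}_0$, so $\Lambda_n$ would be separable and $\HH^{j}(\Lambda_n)=0$ for all $j>0$, contradicting the paper's own computation that $\HH^1(\Lambda_0)\neq 0$. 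Consequently your inductive construction of $\hh^{0},\hh^{1},\ldots$ fails at the base case: the degree-zero residue satisfies $(\partial^0\ot\partial^0)(\Delta_{n,0}-\Delta^{'}_{n,0})=-\partial^0\neq 0$, so it does not lie in $\Ker(\partial^0\ot\partial^0)=\Img(d^{1})$ and cannot be written as $d^{1}\hh^{0}$. A map of the stated form built from genuine bimodule maps $\hh^{\bullet}$ (with no degree $-1$ component) necessarily lifts the same map as $\Delta^{'}_{n,\bullet}$, namely multiplication by $2$; the hypothesis $char(\kk)\neq 2$ lets one repair this by rescaling (e.g.\ $\tfrac{1}{2}\Delta^{'}_{n,\bullet}$ lifts $\mathbf{1}$), not by a homotopy. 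This defect is latent in the paper as well---its Example \ref{example-of-chain-map} gives $\Delta_{n,0}(e_0\ot e_0)=\Delta^{'}_{n,0}(e_0\ot e_0)$, which augments to $2e_0$ rather than $e_0$---but the paper's proof simply never addresses the lifting claim, whereas your argument asserts a lifting step that is false as stated.
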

\begin{proof}
We have for every $m$, 
\begin{align*}
\Delta_{n,m}\partial^{m+1} &= \Delta^{'}_{n,m}\partial^{m+1} + \hh^{m-1}\partial^{m}\partial^{m+1} + d^{m+1}\hh^{m}\partial^{m+1} \\
&= \Delta^{'}_{n,m}\partial^{m+1} + d^{m+1}\hh^{m}\partial^{m+1}
\end{align*}
and 
\begin{align*}
d^{m+1}\Delta_{n,m+1} &= d^{m+1}\Delta^{'}_{n,m+1} + d^{m+1}\hh^{m}\partial^{m+1} + d^{m+1} d^{m+2}\hh^{m+1}\\
&= d^{m+1}\Delta^{'}_{n,m+1} + d^{m+1}\hh^{m}\partial^{m+1},
\end{align*}
so that $\Delta_{n,m}\partial^{m+1} - d^{m+1}\Delta_{n,m+1} =  \Delta^{'}_{n,m}\partial^{m+1} - d^{m+1}\Delta^{'}_{n,m+1} = 0$ by the result of Lemma \ref{deltap-chain_map}.
\end{proof}

\begin{remark}\label{acyclic-condition}
Notice the difference of the maps $\mathbf{1}-\pi=\mathbf{1}$, therefore, $\Delta_{n,\bullet} - \Delta^{'}_{n,\bullet}$ is a chain map lifting the identity map and $\hh^{\bullet}$ is a chain contraction of the identity map. Therefore the resolution $\mathbb{Q}_{\bullet}$ is acyclic\cite{witherspoon2019hochschild}. Thus, we realize the results of \cite[Theorem 3.3]{furuya2012projective}
\end{remark}

\begin{Proposition}
    The projective bimodule resolution $\mathbb{Q}_{\bullet}$ is acyclic.
\end{Proposition}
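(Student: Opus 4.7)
The plan is to recognize that the statement is essentially a restatement of Theorem \ref{major-res}, which already asserts that $\mathbb{Q}_\bullet$ is a minimal projective bimodule resolution of $\Lambda_n$. Being a resolution means the augmented complex $\mathbb{Q}_\bullet \xrightarrow{\partial^0} \Lambda_n \to 0$ is exact, i.e., $H_i(\mathbb{Q}_\bullet) = 0$ for $i \geq 1$ and $H_0(\mathbb{Q}_\bullet) \cong \Lambda_n$ via $\partial^0$, which is precisely acyclicity. So the most economical first step is simply to invoke Theorem \ref{major-res}.

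For a self-contained argument along the lines of Remark \ref{acyclic-condition}, I would reason as follows. From Lemma \ref{deltap-chain_map} we have the chain map $\Delta'_{n,\bullet}$ lifting $\pi: x \mapsto 2x$, and from Theorem \ref{comultiplicative_map} the chain map $\Delta_{n,\bullet}$ lifting the identity $\mathbf{1}$, linked by the chain homotopy $\hh^\bullet$ through the relation $\Delta_{n,\bullet} - \Delta'_{n,\bullet} = \hh^{\bullet-1}\partial^\bullet + d^{\bullet+1}\hh^\bullet$. Since $\mathrm{char}(\kk) \neq 2$, the induced map $\mathbf{1} - \pi = -\mathbf{1}$ is an automorphism of $\Lambda_n$, so $\Delta_{n,\bullet} - \Delta'_{n,\bullet}$ is a lift of an automorphism and the existence of the chain homotopy $\hh^\bullet$ makes this lift null-homotopic as a map into $\mathbb{P}_\bullet$. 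By the standard comparison argument for complexes of projectives, this forces the identity map on $H_\bullet(\mathbb{Q}_\bullet)$ to coincide with negation, which pins down the higher homology groups as zero and identifies $H_0$ with $\Lambda_n$ through $\partial^0$.

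The main obstacle in the self-contained route is the bookkeeping that $\hh^\bullet$ lives naturally in the tensor complex $\mathbb{P}_\bullet = \mathbb{Q}_\bullet \ott \mathbb{Q}_\bullet$ rather than in $\mathbb{Q}_\bullet$ itself, so strictly justifying the comparison step requires either an appeal to K\"unneth-type acyclicity of $\mathbb{P}_\bullet$ (which is not logically independent of the conclusion we want) or the extraction of a contracting homotopy on $\mathbb{Q}_\bullet$ from $\hh^\bullet$ by evaluating one tensor factor against $\Lambda_n$. For this reason the cleanest closing sentence is simply to note that Theorem \ref{major-res} supplies the conclusion, with Remark \ref{acyclic-condition} providing the additional perspective.
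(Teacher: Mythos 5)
Your proposal is correct, and your primary route --- simply invoking Theorem \ref{major-res}, which already asserts that $\mathbb{Q}_{\bullet}$ is a (minimal) projective bimodule resolution of $\Lambda_n$ --- is the soundest way to close the argument. The paper instead proves the Proposition by citing Remark \ref{acyclic-condition}, which argues that $\Delta_{n,\bullet}-\Delta'_{n,\bullet}$ lifts the identity (the Remark writes $\mathbf{1}-\pi=\mathbf{1}$, though with $\pi$ being multiplication by $2$ the difference is really $-\mathbf{1}$; you have the sign right) and that $\hh^{\bullet}$ is therefore a chain contraction, whence acyclicity. Your second paragraph engages with exactly this route, and the obstacle you flag is genuine and is not addressed in the paper: $\hh^{\bullet}$ is a homotopy between two chain maps $\mathbb{Q}_{\bullet}\to\mathbb{P}_{\bullet}$, not a contracting homotopy of the identity on $\mathbb{Q}_{\bullet}$ itself, so one must either collapse one tensor factor of $\mathbb{P}_{\bullet}$ against $\Lambda_n$ or invoke acyclicity of $\mathbb{P}_{\bullet}$ --- and, as you observe, the latter (and indeed the very existence of the lifts $\Delta_{n,\bullet}$, $\Delta'_{n,\bullet}$ by ``fundamental results of homological algebra'') already presupposes that $\mathbb{Q}_{\bullet}$ is a resolution, so the Remark-based argument is circular as an independent proof. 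In short: your conclusion and your recommended proof agree with the paper's intent, but your diagnosis that Theorem \ref{major-res} should carry the weight, with the Remark offering only a consistency check rather than a proof, is the more defensible reading.
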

\begin{proof}
    Follows from Remark \ref{acyclic-condition}
\end{proof}

\subsection{Comultiplicative structure:} We recall a basis for $\HHom_{\Lambda_n^e}(\mathbb{Q}_{\bullet},\Lambda_n)$ according to \cite[Lemma 4.1]{furuya2012projective}. We start with a basis for $\HHom_{\Lambda_n^e}(\mathbb{Q}_{0},\Lambda_0)$. This is the case where $m=0$ and $n=0$. If $\phi(e_i\ot e_j)=w$, then $\mfo(w)=e_i$ and $\mft(w)=e_j$ so that as $e_i\phi(e_i\ot e_j)e_j=e_iwe_j=w$ . We know that since the set $\{e_0\ot e_0, e_1\ot e_1, f_1\ot f_1, e_2\ot e_2\}$ form a basis for $\mathbb{Q}_0 = \bigoplus_{g\in \mathcal{G}^0}\Lambda_0\mfo(g)\ot\mft(g)\Lambda_0$ as a bimodule for all $n$, the images of $e_i\ot e_j$ under any bimodule homomorphism $\phi:\mathbb{Q}_{0}\rightarrow\Lambda_0$ are elements whose origin vertex is $e_i$ and whose terminal vertex is $e_j$. This means we have the set \{ $\alpha^0_0: e_0\ot e_0\mapsto e_0$, $\alpha^0_1: e_1\ot e_1\mapsto e_1$, $\alpha^0_2: e_2\ot e_2\mapsto e_2$, and $\beta: f_1\ot f_1\mapsto f_1$, 0\;\text{otherwise}\} of bimodule homomorphisms as a basis for $\HHom_{\Lambda_0^e}(\mathbb{Q}_{0},\Lambda_0)$: we write 

$$\alpha_i^0: 
     \begin{cases}
     e_0\ot e_0 \mapsto e_0 & i=0\\
     e_1\ot e_1 \mapsto e_1 & i=1\\
     e_2\ot e_2 \mapsto e_2 & i=2\\
     e_i\ot e_j \mapsto 0 & i\neq j
     \end{cases}, \beta:\begin{cases}
     f_1\ot f_1 \mapsto f_1\\
     e_i\ot e_j \mapsto 0 & i\neq j
     \end{cases}$$

and $0$ otherwise. For general $n$, a generating set for $\HHom_{\Lambda_n^e}(\mathbb{Q}_{0},\Lambda_n)$ is the set $\{\alpha_s^t, \beta |\; s=0,1,2,\; t=0,1,2,\ldots,n\}$ given on $e_l\ot e_l$, $l=0,1,2$ and $f_1\ot f_1$ by
$$\alpha_s^t: 
     \begin{cases}
     e_l\ot e_l \mapsto \begin{cases} (a_sa_{s+1}a_{s+2})^t& s=l\\
      0& s\neq l
     \end{cases}\\
     f_1\ot f_l \mapsto 0 & 
     \end{cases},$$
     
$$\beta: \begin{cases}
    f_1\ot f_1 \mapsto f_1\\
    e_l\ot e_l \mapsto 0 &l=0,1,2\\
\end{cases}$$     

Also, the following homomorphisms form a basis for $\HHom_{\Lambda_{n}^e} (\mathbb{Q}_{3m},\Lambda_n)$ with $m>0$;  $\{\phi_i^t, \psi |\; i=0,1,2,\; t=0,1,2,\ldots,n\}$ given by 
$$\phi_i^t: \begin{cases}
    e_l\ot e_l \mapsto \begin{cases}
    (a_ia_{i+1}a_{i+2})^t & i=l\\
     0 &i\neq l\\
\end{cases}\\
    f_1\ot f_1 \mapsto 0\\
    f_1\ot e_1 \mapsto 0\\
    e_l\ot f_l \mapsto 0 &
\end{cases} \qquad
\psi:\begin{cases}
    f_1\ot f_1 \mapsto f_1\\
    f_1\ot e_1 \mapsto 0\\
    e_0\ot f_1 \mapsto 0\\
    e_l\ot e_l \mapsto 0 & \text{for all}\hspace{0.1cm} l
\end{cases}$$

A basis for $\HHom_{\Lambda_{n}^e} (\mathbb{Q}_{3m+1},\Lambda_n)$ with $m\geq0$ is the set $\{\mu_i^t, \nu_0, \nu_1 |\; i=0,1,2,\; t=0,1,2,\ldots,n\}$ defined by 
$$\mu_i^t:\begin{cases}
    e_l\ot e_{l+1} \mapsto  (a_ia_{i+1}a_{i+2})^ta_i & i=l\\
    e_l\ot e_{l+1} \mapsto 0 & i\neq l\\
    e_0\ot f_1 \mapsto 0\\
    f_1\ot e_2 \mapsto 0\\
    f_1\ot f_1 \mapsto 0
\end{cases} \qquad \nu_0:\begin{cases}
    e_0\ot f_1\mapsto b_0\\
    f_1\ot e_2\mapsto 0\\
    e_s\ot e_l\mapsto 0 & \text{for all}\hspace{0.1cm} s, l\\
    f_1\ot f_1\mapsto 0
\end{cases}$$

$$\nu_1: \begin{cases}
    e_0\ot f_1\mapsto 0\\
    f_1\ot e_2 \mapsto b_1\\
    e_s\ot e_l \mapsto 0 & \text{for all}\hspace{0.1cm} s, l\\
    f_1\ot f_1 \mapsto 0.
\end{cases}$$

A basis for $\HHom_{\Lambda_{n}^e} (\mathbb{Q}_{3m+2},\Lambda_n)$ with $m\geq0$ is the set $\{\theta_i^t, \eta |\; i=0,1,2,\; t=0,1,2,\ldots,n-1\}$ with $\theta_i^t: e_l\ot e_{l+2}\mapsto (a_ia_{i+1}a_{i+2})^ta_ia_{i+1}$ if $i=l$ and $0$ if $i\neq l$, $\theta_i^t: f_1\ot e_0\mapsto 0, e_2\ot f_1\mapsto 0,$ and $\eta: e_0\ot e_{2}\mapsto (a_0a_1a_2)^na_0a_1, e_1\ot e_0\mapsto 0, e_2\ot e_1\mapsto 0, f_1\ot e_0\mapsto 0, e_2\ot f_1\mapsto 0$.

$$\theta_i^t:\begin{cases}
    e_l\ot e_{l+2}\mapsto (a_ia_{i+1}a_{i+2})^ta_ia_{i+1} & i=l\\
    e_l\ot e_{l+2} \mapsto 0 & i\neq l\\
    f_1\ot e_0 \mapsto 0\\
    e_2\ot f_1\mapsto 0\\
    f_1\ot f_1\mapsto 0
\end{cases} \qquad \eta:\begin{cases}
    e_0\ot e_2\mapsto (a_0a_1a_2)^na_0a_1\\
    e_s\ot e_l\mapsto 0 & s\neq l\\
    f_1\ot e_0\mapsto 0\\
    e_2\ot f_1\mapsto 0
\end{cases}$$

The following lemma presents a summary of the above result.
\begin{lemma}\cite[]{furuya2012projective}\label{Hochschild_dim}
For $n\geq 0$ and $m\geq 0$, the generators for $\HHom_{\Lambda_n^e}(\mathbb{Q}_{*},\Lambda_n)$ are given as follows:
\begin{enumerate}
\item A $\kk$-basis for $\HHom_{\Lambda_n^e}(\mathbb{Q}_{0},\Lambda_n)$ is the set $$\{\alpha_s^t, \beta |\; s=0,1,2,\; t=0,1,2,\ldots,n\}$$
\item A $\kk$-basis for $\HHom_{\Lambda_n^e}(\mathbb{Q}_{3m},\Lambda_n)$ is the set $$\{\phi_i^t, \psi |\; i=0,1,2,\; t=0,1,2,\ldots,n\}$$
\item A $\kk$-basis for $\HHom_{\Lambda_n^e}(\mathbb{Q}_{3m+1},\Lambda_n)$ is the set $$\{\mu_i^t, \nu_s |\; i=0,1,2,\; t=0,1,2,\ldots,n,\; s=0,1\}$$
\item A $\kk$-basis for $\HHom_{\Lambda_n^e}(\mathbb{Q}_{3m+2},\Lambda_n)$ is the set $$\{\theta_i^t, \eta |\; i=0,1,2,\; t=0,1,2,\ldots,n-1\}$$
\end{enumerate}
\end{lemma}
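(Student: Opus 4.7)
The plan is to use the standard adjunction: because $\mathbb{Q}_m = \bigoplus_{g\in\mathcal{G}^m} \Lambda_n\mfo(g)\ot\mft(g)\Lambda_n$ is a free $\Lambda_n^e$-module on the idempotent generators $\mfo(g)\ot\mft(g)$, there is a natural $\kk$-linear isomorphism
\[
\HHom_{\Lambda_n^e}(\mathbb{Q}_m,\Lambda_n)\;\cong\;\bigoplus_{g\in\mathcal{G}^m}\mfo(g)\,\Lambda_n\,\mft(g),
\]
sending a homomorphism $\phi$ to the tuple $(\phi(\mfo(g)\ot\mft(g)))_g$. So, after identifying (via the list of generators immediately preceding Theorem~\ref{major-res}) the generating set $\{e_i\ot e_j\}$ in each degree mod~$3$, the problem reduces to computing a $\kk$-basis for each subspace $e_i\Lambda_n e_j$ (or $f_r\Lambda_n e_j$, etc.) that appears as a summand.

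To do this I would enumerate, for each ordered pair of vertices $(v,w)\in\{e_0,e_1,f_1,e_2\}^2$ occurring in the generating set of $\mathbb{Q}_{3m}, \mathbb{Q}_{3m+1}, \mathbb{Q}_{3m+2}$, all uniform paths from $v$ to $w$ modulo the ideal $I_n$ of \eqref{fam-ideals}. The relations to exploit are $(a_0a_1a_2)^na_0a_1=b_0b_1$, $(a_ia_{i+1}a_{i+2})^na_ia_{i+1}=0$ for $i=1,2$, and $b_ib_{i+1}=0$ for $i=1,2$. From these it follows that $b_1b_2=b_2b_0=0$, so any path passing through $f_1$ in more than one step is zero; in particular $f_1\Lambda_n e_1=e_1\Lambda_n f_1=f_1\Lambda_n e_0=e_2\Lambda_n f_1=0$, which is why no homomorphisms on the generators $e_1\ot f_1$, $f_1\ot e_1$, $f_1\ot e_0$, $e_2\ot f_1$ appear. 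For the remaining summands one obtains:
\begin{itemize}
\item $e_l\Lambda_n e_l$ has basis $\{(a_la_{l+1}a_{l+2})^t\mid t=0,1,\ldots,n\}$, since $(a_la_{l+1}a_{l+2})^{n+1}$ is killed by the rotated cubic relations;
\item $f_1\Lambda_n f_1 = \kk f_1$;
\item $e_l\Lambda_n e_{l+1}$ has basis $\{(a_la_{l+1}a_{l+2})^ta_l\mid t=0,\ldots,n\}$ and $e_0\Lambda_n f_1=\kk b_0$, $f_1\Lambda_n e_2=\kk b_1$;
\item $e_l\Lambda_n e_{l+2}$ has basis $\{(a_la_{l+1}a_{l+2})^ta_la_{l+1}\mid t=0,\ldots,n-1\}$ together with the single extra element $(a_0a_1a_2)^na_0a_1=b_0b_1$ when $l=0$.
\end{itemize}
Each basis vector of a summand yields exactly one bimodule homomorphism that sends the corresponding generator $\mfo(g)\ot\mft(g)$ to that vector and all other generators to zero, and these are precisely the $\alpha_s^t,\beta,\phi_i^t,\psi,\mu_i^t,\nu_s,\theta_i^t,\eta$ listed in the statement.

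The main (only) obstacle is the path-enumeration itself: one must verify that no further paths survive modulo $I_n$ beyond those listed, and that the listed paths are $\kk$-linearly independent in $\Lambda_n$. Both are handled by the same case analysis: independence follows because each listed representative has a distinct length, so they project to distinct basis elements of the associated graded algebra, while exhaustiveness follows by induction on path length using the cubic relations to rewrite $(a_la_{l+1}a_{l+2})^{n+1}$ and the $b$-relations to kill every path that traverses two consecutive $b$-arrows. Combining the counts across the four parts of the lemma then yields the claim.
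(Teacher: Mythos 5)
Your proposal is correct and follows essentially the same route as the paper: the paper's discussion preceding the lemma (and the cited Lemma~4.1 of Furuya) likewise uses the freeness of $\mathbb{Q}_m$ on the generators $\mfo(g)\ot\mft(g)$ to identify $\HHom_{\Lambda_n^e}(\mathbb{Q}_m,\Lambda_n)$ with $\bigoplus_g \mfo(g)\Lambda_n\mft(g)$ and then reads off a path basis of each corner space modulo $I_n$. Your path enumeration and dimension counts agree with the listed maps $\alpha_s^t,\beta,\phi_i^t,\psi,\mu_i^t,\nu_s,\theta_i^t,\eta$; the only soft spot is the appeal to ``distinct lengths in the associated graded algebra'' for linear independence, since the relation $(a_0a_1a_2)^na_0a_1-b_0b_1$ is inhomogeneous, but this does not affect the conclusion.
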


The multiplicative structure on Hochschild cohomology $\HH^*(\Lambda_n)$ for this family involves using the cup product of Definition \eqref{cup-definition}. This involves the composition of the following maps at the chain level in which for any $f \in \HHom_{\Lambda_n^e}(\mathbb{Q}_{a},\Lambda_n)$ and $g \in \HHom_{\Lambda_n^e}(\mathbb{Q}_{b},\Lambda_n)$, $f\smallsmile g\in \HHom_{\Lambda_n^e}(\mathbb{Q}_{a+b},\Lambda_n)$ is given by the following composition of maps;
$$\mathbb{Q}{\bullet}\xrightarrow{\Delta} \mathbb{Q}_{\bullet}\ot_{\Lambda_n} \mathbb{Q}_{\bullet} \xrightarrow{f\ot g} \Lambda_n \ot_{\Lambda_n}\Lambda_n \xrightarrow{} \Lambda_n,$$
that is $f\smallsmile g:= (f\ot g)\Delta_{n,a+b}$ where the last map is multiplication. We use the fact that Hochschild cohomology is graded commutative to evaluate $g\smallsmile f= (-1)^{ab}f\smallsmile g$. Furthermore, define the relationship between the cup product and the star product in degree $a+b=m$ as
\begin{align*}\label{cup-product-delta}
    f\smallsmile g &= (f\ot g)\Delta_{n,m}\\
    =&(f\ot g)(\Delta'_{n,\bullet} + \hh^{m-1}\partial^m+d^{m+1}\hh^m)\\
    =& f\star g +(f\ot g)(\hh^{m-1}\partial^m+d^{m+1}\hh^m)
\end{align*}
We obtain a general result for the star product $f\star g$ in Theorem \ref{comultiplicative_structure} and present the cup product on Hochschild cohomology for $\Lambda_0$. We will need the following definition in order to present $\HH^*(\Lambda_0).$
\begin{definition}\label{contracting-homo-exam}
Let $\Lambda_n = \kk Q/I_n$ be the family of quiver algebras of \eqref{family} and let $w_i$ be any idempotent in the set $\{e_0,e_1,f_1,e_2\}$ with the index $i$ given mode 3. Then a chain homotopy $\hh^{\bullet}$ given as $\hh^*:\Lambda_n\rightarrow \mathbb{P}_{1}$, $\hh^0:\mathbb{Q}_{3m}\rightarrow \mathbb{P}_{3m+1}$, $\hh^1:\mathbb{Q}_{3m+1}\rightarrow \mathbb{P}_{3m+2}$ and $\hh^2:\mathbb{Q}_{3m+2}\rightarrow \mathbb{P}_{3m}$ can be defined in the following ways;
\begin{align*}
\hh^*(w_i) &=\begin{cases} -(w_i\ot w_i) \bar{\ot} (w_i\ot w_{i})a_i, & \mfo(a_i)=w_i\\
(w_i\ot w_i) \bar{\ot} (w_i\ot w_{i})b_i, & \mfo(b_i)=w_i
\end{cases}\\
\hh^0(w_i\ot w_i) &= (w_i\ot w_i) \bar{\ot} (w_i\ot w_{i+1}) \\
\hh^1(w_i\ot w_{i+1}) &= (w_i\ot w_i) \bar{\ot} (w_i\ot w_{i+2}) \\
\hh^2(w_i\ot w_{i+2}) &= (w_i\ot w_i) \bar{\ot} (w_i\ot w_{i+3}).
\end{align*}
\end{definition}

The following diagram demonstrates $\hh^{\bullet}.$
\[
\begin{tikzcd}[row sep=large, column sep=huge]
\cdots \arrow[r, "\partial^2"] 
  & \mathbb{Q}_1 \arrow[r, "\partial^1"] \arrow[d, "\Delta_{n,1}", left=50] \arrow[d, "\Delta^{'}_{n,1}"', bend right=20] \arrow[dl, Rightarrow, "\hh^1" description]
  & \mathbb{Q}_{0} \arrow[r, "\partial^0"] \arrow[d, "\Delta_{n,0}", left=50] \arrow[d, "\Delta^{'}_{n,0}"', bend right=20] \arrow[dl, Rightarrow, "\hh^0" description]
  & \Lambda_n \arrow[d, "\mathbf{1}", left=50] \arrow[d, "\pi"', bend right=20] \arrow[dl, Rightarrow, "\hh^*" description]\\
\cdots \arrow[r, "d^2"]
  & \mathbb{P}_1 \arrow[r, "d^1"]
  & \mathbb{P}_0 \arrow[r, "d^0"]
  & \Lambda_n\ott\Lambda_n
\end{tikzcd}
\]
\begin{example}\label{example-of-chain-map}
The following are some examples of images of chain maps $\Delta_{n,\bullet}$ and $\Delta^{'}_{n,\bullet}$ ($n=0$) on some basis elements. We make use of homotopy chain maps of Definition \ref{contracting-homo-exam}.
\begin{align*}
&\Delta_{n,0}(e_0\ot e_0) = (\Delta^{'}_{n,0}+d^1\hh^0 + \hh^*\partial^0)(e_0\ot e_0)\\
&= \Delta^{'}_{n,0}(e_0\ot e_0) + d^1((e_0\ot e_0)\ott (e_0\ot e_1)) + \hh^*(e_0) \\
&= \Delta^{'}_{n,0}(e_0\ot e_0) + (e_0\ot e_0)\ott (e_0\ot e_0a_0 - a_1e_1\ot e_1)\\
&- (e_0\ot e_0)\ott (e_0\ot e_0)a_0 \\
&= \Delta^{'}_{n,0}(e_0\ot e_0) + (e_0\ot e_0)\ott (e_0\ot e_0)a_0 - a_1(e_0\ot e_0)\ott (e_0\ot e_0) \\
&- (e_0\ot e_0)\ott (e_0\ot e_0)a_0 \\
&= \Delta^{'}_{n,0}(e_0\ot e_0).
\end{align*}

Similar computations show that
\begin{align*}
\Delta_{n,0}(e_1\ot e_1)&=\Delta'_{n,0}(e_1\ot e_1)\\
\Delta_{n,0}(e_2\ot e_2)&=\Delta'_{n,0}(e_2\ot e_2)\\
\Delta_{n,0}(f_1\ot f_1)&=\Delta'_{n,0}(f_1\ot f_1)+(f_1\ot f_1)\ott (f_1\ot e_1) a_1
\end{align*}
and at degree one, we have
\begin{align*}
&\Delta_{n,1}(e_1\ot e_2)=(\Delta_{n,1}'+d^2\hh^1+\hh^0\partial^1)(e_1\ot e_2)\\
&=\Delta_{n,1}'(e_1\ot e_2)+d^2((e_1\ot e_1)\ott (e_1\ot e_0))+\hh^0(e_1\ot e_1a_1- a_1e_2\ot e_2)\\
&=\Delta'_{n,1}(e_1\ot e_2)+(\partial\ot 1 +(-1)^\ast 1\ot \partial) (e_1\ot e_1)\ott (e_1\ot e_0) \\
&+\hh^0(e_1\ot e_1)a_1-a_1\hh^0(e_2\ot e_2)\\
&=\Delta_{n,1}'(e_1\ot e_2)+0+ (-1)^0(e_1\ot e_1)\ott \partial (e_1\ot e_0)+(e_1\ot e_1\ott e_1\ot e_2)a_1\\
&-a_1(e_2\ot e_2\ott e_2\ot e_0)\\
&= \Delta_{n,1}'(e_1\ot e_2)+(e_1\ot e_1)\ott (e_1\ot e_2)a_2 - a_1(e_1\ot e_1)\ott (e_2\ot e_0) \\
&+ (e_1\ot e_1\ott e_1\ot e_2)a_1-a_1(e_2\ot e_2\ott e_2\ot e_0)\\
&= \Delta_{n,1}'(e_1\ot e_2)+(e_1\ot e_1)\ott (e_1\ot e_2)a_2 -a_1(e_2\ot e_2\ott e_2\ot e_0)
\end{align*}

Similar computations show that
\begin{align*}
\Delta_{n,1}(e_0\ot e_1)&=\Delta'_{n,1}(e_0\ot e_1) + (e_0\ot e_0)\ott (e_0\ot e_1) a_1 \\
&- (e_0\ot e_0)\ott (e_0\ot f_1) b_1 
 - a_0(e_1\ot e_1)\ott (e_1\ot e_2).\\
\Delta_{n,1}(e_0\ot f_1)&=\Delta'_{n,1}(e_0\ot f_1) + (e_0\ot e_0)\ott (e_0\ot f_1)b_1 + b_0(f_1\ot f_1\ott f_1\ot e_2)
\end{align*}
\begin{align*}
\Delta_{n,1}(e_2\ot e_0)&=\Delta'_{n,1}(e_2\ot e_0)+ (e_2\ot e_2)\ott (e_2\ot e_0)a_0 - a_2(e_0\ot e_0\ott e_0\ot e_1) \\
\Delta_{n,1}(f_1\ot e_2)&=\Delta'_{n,1}(f_1\ot e_2)+ (f_1\ot f_1)\ott (f_1\ot e_0)b_0 - b_1(e_2\ot e_2\ott e_2\ot e_0).
\end{align*}
\end{example}

\begin{Proposition}\label{cohomology-L-0}
    $$\HH^*(\Lambda_0) = \kk\langle x, y, z\rangle /(x^2-2x,xy-y,yx-y,zx-z, xz-z,z^2-x, y^2, yz, zy)$$ 
    where the degrees of $x,y,z$ are $0,1$ and $6$ respectively.
\end{Proposition}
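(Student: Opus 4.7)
The plan has three stages. Stage one is to determine the additive structure of $\HH^{\ast}(\Lambda_0)$. Specializing Lemma~\ref{Hochschild_dim} to $n=0$, the bases of $\HHom_{\Lambda_0^e}(\mathbb{Q}_j,\Lambda_0)$ shrink drastically: $\{\alpha_s^0,\beta\}$ in degree $0$ ($4$ elements), $\{\mu_i^0,\nu_0,\nu_1\}$ in each degree $3m{+}1$ ($5$ elements), only $\eta$ in each degree $3m{+}2$, and $\{\phi_i^0,\psi\}$ in each degree $3m$ with $m\geq 1$ ($4$ elements). Dualizing the differentials of Definition~\ref{differentials} with $n=0$, I would compute kernels and images of $\partial^{j\ast}$ directly and extract explicit cocycle representatives $x\in\HH^0(\Lambda_0)$, $y\in\HH^1(\Lambda_0)$, and $z\in\HH^6(\Lambda_0)$; in particular $x$ is chosen (essentially twice a sum-of-idempotents cocycle) so that the relation $x\smile x=2x$ appears at the chain level.

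Stage two evaluates the nine pairwise cup products $x\smile x$, $x\smile y$, $y\smile x$, $x\smile z$, $z\smile x$, $y\smile y$, $y\smile z$, $z\smile y$, $z\smile z$ via
\[
f\smile g \;=\; (f\otimes g)\,\Delta_{0,\bullet} \;=\; (f\otimes g)\,\Delta'_{0,\bullet} \;+\; (f\otimes g)\bigl(\hh^{\bullet-1}\partial^{\bullet}+d^{\bullet+1}\hh^{\bullet}\bigr),
\]
using the explicit form of $\Delta'_{0,\bullet}$ from Lemma~\ref{deltap-chain_map}, the homotopy $\hh^{\bullet}$ of Definition~\ref{contracting-homo-exam}, and the star-product formulas stated in Theorem~\ref{comultiplicative_structure}. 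Each product is evaluated on the basis elements $e_i\ot e_j$ and $f_1\ot f_1$ of $\mathbb{Q}_{\deg f+\deg g}$ by expanding the summation in $\Delta'_{0,\bullet}$ over all idempotents; the computation should yield $x^2=2x$, $xy=y=yx$, $y^2=0$, $xz=z=zx$, $yz=0=zy$, and $z^2=x$ at the cocycle level, together with explicit coboundary witnesses accounting for the homotopy corrections.

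Stage three verifies completeness of the presentation. The monomial basis $\{x^{a}y^{\epsilon}z^{k}:a,\epsilon\in\{0,1\},\ k\geq 0\}$ modulo the six quadratic relations has a predictable total graded dimension, and matching this degree by degree against the dimension formula for $\HH^j(\Lambda_0)$ from~\cite{furuya2012projective} forces the conclusion that no additional relations are needed.

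The principal obstacle will be $z\smile z=x$. Because $z$ sits in degree $6$ and the differentials $\partial^{\bullet}$ of $\mathbb{Q}_{\bullet}$ repeat only modulo $6$ while the module generators $\mathcal{G}^{\bullet}$ repeat modulo $3$, the map $\Delta_{0,6}$ produces contributions across several bidegrees of $\bigoplus_{a+b=12}\mathbb{Q}_a\ott\mathbb{Q}_b$, and identifying the resulting degree-$12$ cocycle with the representative of $x$ will require a careful unwinding of the periodicity together with detailed bookkeeping of the homotopy corrections from $\hh^{\bullet}$ across six consecutive degrees. A secondary technical point is producing explicit coboundary witnesses for $y^2$, $yz$ and $zy$, confirming that these products actually vanish in $\HH^{\ast}(\Lambda_0)$ rather than being merely star-product-zero at the chain level.
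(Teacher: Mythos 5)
Your proposal follows essentially the same route as the paper: the paper likewise takes the representatives $x=\alpha_0^0+\alpha_1^0+\alpha_2^0+\beta$, $y=\mu_0^0+\nu_0$, $z=\phi_0^0+\phi_1^0+\phi_2^0-\psi$ from Lemma \ref{Hochschild_dim} (citing Furuya's Proposition 4.9 for the degree-wise generators and the vanishing of $\HH^2(\Lambda_0)$ through $\HH^5(\Lambda_0)$ rather than recomputing kernels and images as you propose) and then evaluates the pairwise products via $\Delta_{0,\bullet}=\Delta'_{0,\bullet}+\hh^{\bullet-1}\partial^{\bullet}+d^{\bullet+1}\hh^{\bullet}$ exactly as in your stage two. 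The only substantive difference is that the paper omits your stage-three dimension check and reads the presentation directly off the resulting multiplication table.
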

\begin{proof}
We have $\Lambda_0 = \kk\mathcal{Q}/\langle  a_0a_1-b_0b_1, a_1a_2,a_2a_0, b_1b_2, b_2b_0 \rangle$. From Lemma \ref{Hochschild_dim}, the set of maps $\{\alpha_0^0,\alpha_1^0, \alpha_2^0, \beta\}$ forms a generating set for $\HHom(\mathbb{Q}_0,\Lambda_0)$. Moreover for $t\in\mathbb{Q}_1$, $t$ is a linear combination of basis elements. Write $t=e_0\ot e_1+e_0\ot f_1+ e_1\ot e_2 + f_1\ot e_2 + e_2\ot e_0$, calculations shows that
\begin{align*}
&(\alpha_0^0+\alpha_1^0+\alpha_2^0+\beta)(\partial^1(t)) = (\alpha_0^0+\alpha_1^0+\alpha_2^0+\beta)(e_0\ot e_0a_0 - a_0 e_1\ot e_1 \\
&+ e_0\ot e_0b_0 - b_0 f_1\ot f_1 + e_1\ot e_1a_1 - a_1 e_2\ot e_2 + f_1\ot f_1b_1 - b_1 e_2\ot e_2 \\
&+ e_2\ot e_2a_2 - a_2 e_0\ot e_0) = e_0a_0 - a_0e_1 + f_0b_0-b_0f_1+e_1a_1-a_1e_2\\
&+f_1b_1-b_1f_2+e_2a_2-a_2e_0 = 0,
\end{align*}
showing that $(\alpha_0^0+\alpha_1^0+\alpha_2^0+\beta):=x$ generates $\HH^0(\Lambda_0)$. Further calculations shows that $(\mu_0^0+\nu_0):=y$ generates $\HH^1(\Lambda_0)$, and
$(\phi_0^0+\phi_1^0+\phi_2^0-\psi):=z$ generates $\HH^6(\Lambda_0)$ when $char(\kk)\neq 2$ \cite[Proposition 4.9]{furuya2012projective}. The second, third, fourth and fifth Hochschild cohomology groups are zero i.e. $\HH^2(\Lambda_0) = \HH^3(\Lambda_0) = \HH^4(\Lambda_0) = \HH^5(\Lambda_0) = 0$. Using $\Delta_{n,m}$ of Example \ref{example-of-chain-map}, we obtain the following table of multiplication at the chain level
\begin{tabular}{|c|c|c|c|} \hline
& $x\in\HHom(\mathbb{Q}_{0},\Lambda_0)$ & $y\in \HHom(\mathbb{Q}_{1},\Lambda_0)$ & $z\in\HHom(\mathbb{Q}_{6},\Lambda_0)$ \\\hline
$x\in\HHom(\mathbb{Q}_{0},\Lambda_0)$ & 2x & y & z \\\hline
$y\in\HHom(\mathbb{Q}_{0},\Lambda_0)$ & y & 0 & 0 \\\hline
$z\in\HHom(\mathbb{Q}_{0},\Lambda_0)$ & z & 0 & x \\\hline
\end{tabular}
So we have $\HH^*(\Lambda_0) = \kk\langle x, y, z\rangle /(x^2-2x,xy-y,yx-y,zx-z, xz-z,z^2-x, y^2, yz, zy)$ where the degrees of $x$ is $0$, degree of $y$ is 1 and the degree of $z$ is 6.
\end{proof}
We present the following general star product for chain maps $f$ and $g$, $(f\star g)(e_i\ot e_j) = (f\ot g)\Delta_{n,*}')(e_i\ot e_j)$ which can be used to define the cup product on Hochschild cohomology. That is
\begin{equation}
f\smallsmile g = f\star g + (f\ot g)(d\hh + \hh\partial).
\end{equation}

\begin{theorem}\label{comultiplicative_structure}
Let $\Lambda_n = \kk Q/I_n$ be the family of quiver algebras of \eqref{family} with projective bimodule resolution $\mathbb{Q}_{\bullet}$. Suppose that a $\kk$-basis for $\HHom_{\Lambda_n^e}(\mathbb{Q}_{0},\Lambda_n)$ is
$\{\alpha_s^t, \beta |\; s=0,1,2,\; t=0,1,2,\ldots,n\}$,
a basis for $\HHom_{\Lambda_n^e}(\mathbb{Q}_{3m},\Lambda_n)$ is
$\{\phi_i^t, \psi |\; i=0,1,2,\; t=0,1,2,\ldots,n\}$,
a basis for $\HHom_{\Lambda_n^e}(\mathbb{Q}_{3m+1},\Lambda_n)$ is
$\{\mu_i^t, \nu_s |\; i=0,1,2,\; t=0,1,2,\ldots,n,\; s=0,1\}$
and a $\kk$-basis for\\ $\HHom_{\Lambda_n^e}(\mathbb{Q}_{3m+2},\Lambda_n)$ is 
$\{\theta_i^t, \eta |\; i=0,1,2,\; t=0,1,2,\ldots,n-1\}$. 
The following gives the star product $(f\star g):= (f\ot g)\Delta_{n,*}'$ at the chain level for any $f,g\in\HHom_{\Lambda_n^e}(\mathbb{Q}_{a},\Lambda_n)$, $a=3m,3m+1,3m+2.$
\end{theorem}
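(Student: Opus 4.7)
The plan is to evaluate $(f \star g)(e_i \ot e_j) = (f \ot g)\Delta^{'}_{n,a+b}(e_i \ot e_j)$ on each basis element $e_i \ot e_j$ of $\mathbb{Q}_{a+b}$ and for each ordered pair of basis elements drawn from the lists $\{\alpha_s^t, \beta\}$, $\{\phi_i^t, \psi\}$, $\{\mu_i^t, \nu_0, \nu_1\}$, $\{\theta_i^t, \eta\}$ of Lemma \ref{Hochschild_dim}. The entries of the claimed table are then read off by expressing the resulting element of $\Lambda_n$ as a $\kk$-linear combination of the basis homomorphisms in degree $a+b$, organized by $a+b \pmod{3}$.

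The key structural input, which dramatically shortens the computation, is the observation that in the explicit formula of Lemma \ref{deltap-chain_map}, every summand of $\Delta^{'}_{n,m}(w_i \ot w_j)$ has exactly one factor of the form $w \ot w \in \mathbb{Q}_0$ and exactly one factor equal to the original basis element $w_i \ot w_j \in \mathbb{Q}_m$. Consequently, under the splitting $\mathbb{P}_{a+b} = \bigoplus_{a'+b'=a+b} \mathbb{Q}_{a'} \ott \mathbb{Q}_{b'}$, the image of $\Delta^{'}_{n,a+b}$ projects non-trivially only onto the two extreme summands $\mathbb{Q}_0 \ott \mathbb{Q}_{a+b}$ and $\mathbb{Q}_{a+b} \ott \mathbb{Q}_0$. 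Hence $f \star g = 0$ whenever both $a$ and $b$ are positive, and the non-trivial entries of the star product table occur exactly when at least one of $f, g$ lies in $\HHom_{\Lambda_n^e}(\mathbb{Q}_0, \Lambda_n)$. In that case the formula collapses to
\begin{equation*}
(f \star g)(e_i \ot e_j) = \sum_{w} f(w \ot w)\, g(e_i \ot e_j) \quad \text{when } f \in \HHom_{\Lambda_n^e}(\mathbb{Q}_0, \Lambda_n),
\end{equation*}
with the symmetric formula when $g$ has degree zero, where $w$ ranges over $\{e_0, e_1, f_1, e_2\}$ with the multiplicities dictated by Lemma \ref{deltap-chain_map} and the module compatibility relation \eqref{modulestructure}.

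Evaluation is then essentially substitution. Since $\alpha_s^t(w \ot w)$ singles out $w = e_s$ with value $(a_s a_{s+1} a_{s+2})^t$, and $\beta(w \ot w)$ singles out $w = f_1$ with value $f_1$, one reads off products such as $\alpha_s^t \star g = (a_s a_{s+1} a_{s+2})^t \cdot g$ and $\beta \star g = f_1 \cdot g$ interpreted as bimodule homomorphisms, then rewrites the resulting element in the chosen basis of $\HHom_{\Lambda_n^e}(\mathbb{Q}_{a+b}, \Lambda_n)$; the degree-positive factor $g$ carries its defining path into the product, and the idempotent multiplications collapse most terms. The main obstacle will be the bookkeeping across the asymmetric role played by the vertex $f_1$ (which interacts with the $b$-arrows rather than the $a$-arrows and supports $\nu_0, \nu_1, \psi, \beta$): for each generator $e_i \ot e_j$ and each basis element of positive degree one must track precisely which idempotent contributes a non-zero coefficient and which of the resulting paths are annihilated by the relations in $I_n$, with particular attention to the boundary case $(a_0a_1a_2)^n a_0 a_1 = b_0 b_1$ generating $R^2$. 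Once the tabulation is complete, the full cup product on $\HH^*(\Lambda_n)$ at the chain level is recovered by combining the star product with the chain-homotopy correction $f \smallsmile g = f \star g + (f \ot g)(\hh \partial + d\hh)$ established in Theorem \ref{comultiplicative_map}.
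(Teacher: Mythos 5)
Your reduction to the two extreme summands is the point where the argument breaks. You claim that every summand of $\Delta^{'}_{n,m}(w_i\ot w_j)$ lies in $\mathbb{Q}_0\ott\mathbb{Q}_m$ or $\mathbb{Q}_m\ott\mathbb{Q}_0$, and conclude that $f\star g=0$ whenever both $f$ and $g$ have positive degree. That conclusion contradicts the table you are asked to prove: the theorem asserts, for instance, $\phi_i^t\star\phi_{i'}^{t'}=\phi_i^{t+t'}$, $\mu_i^t\star\phi_{i'}^{t'}=\mu_i^{t+t'}$, $\psi\star\psi=\psi$, $\nu_0\star\psi=\nu_0$ and $\eta\star\phi_{i'}^{t'}=\eta$, all of which are products of two elements of positive degree. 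The source of the discrepancy is that the ``diagonal'' generators $w\ot w$ appearing in the formula of Lemma \ref{deltap-chain_map} are generators of $\mathbb{Q}_{3k}$ for every $k\ge 0$, not only of $\mathbb{Q}_0$; a summand such as $(w\ot w)\bar{\ot}(w_i\ot w_j)$ inside $\Delta^{'}_{n,m}(w_i\ot w_j)$ must therefore be read as contributing to every component $\mathbb{Q}_{3k}\ott\mathbb{Q}_{m-3k}$ of $\mathbb{P}_m$, and this is precisely how the paper obtains the nonzero columns $f\star\phi_{i'}^{t'}$ and $f\star\psi$: they mirror the computations of $f\star\alpha_{s'}^{t'}$ and $f\star\beta$, with $e_l\ot e_l$ and $f_1\ot f_1$ now viewed as generators of $\mathbb{Q}_{3m'}$ for $m'>0$. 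Under your reading those two columns would vanish identically, so your plan establishes a strictly weaker table than the one stated.

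The remainder of your outline is sound and matches what the paper actually does for the columns $f\star\alpha_{s'}^{t'}$ and $f\star\beta$: collapse $(f\ot g)\Delta^{'}$ to $\sum_{w} f(w\ot w)\,g(e_i\ot e_j)$ (or its mirror), then track which idempotent survives via the origin/terminal vertex conditions, the bound $t+t'\leq n$, and the relations in $I_n$. To repair the proposal, replace ``at least one of $f,g$ lies in $\HHom_{\Lambda_n^e}(\mathbb{Q}_0,\Lambda_n)$'' by ``at least one of $f,g$ has degree divisible by $3$,'' and rerun the same substitution with the degree-zero evaluations $\alpha_{s'}^{t'}(e_l\ot e_l)$, $\beta(f_1\ot f_1)$ replaced by $\phi_{i'}^{t'}(e_l\ot e_l)$, $\psi(f_1\ot f_1)$. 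The products in which both degrees are $\not\equiv 0 \pmod 3$ (e.g.\ $\mu\star\mu$, $\mu\star\theta$, $\theta\star\theta$, $\nu_s\star\mu$) do vanish for essentially the reason you give, since no component $\mathbb{Q}_a\ott\mathbb{Q}_b$ with $a\not\equiv 0$ and $b\not\equiv 0 \pmod 3$ receives any summand of $\Delta^{'}$.
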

$$
f\star\alpha_{s'}^{t'} = \begin{cases} 
\alpha_s^{t+t'}, & \quad f=\alpha_s^t, \quad s=s'\\
\phi_s^{t+t'}, & \quad f=\phi_i^t, \quad i=s'\\
\mu_s^{t+t'}, & \quad f=\mu_i^t, \quad i=s'\\
\nu_1, & \quad f=\nu_1,\\
\theta_s^{t+t'}, & \quad f=\theta_i^t, \quad i=s'\\
\eta, & \quad f=\eta,\\
0 & \quad \text{otherwise}
\end{cases},
f\star\phi_{i'}^{t'} = \begin{cases} 
\phi_i^{t+t'}, & \quad f=\phi_i^t, \quad i=i'\\
\mu_i^{t+t'}, & \quad f=\mu_i^t, \quad i=i'\\
\nu_1, & \quad f=\nu_1,\\
\theta_i^{t+t'}, & \quad f=\theta_i^t, \quad i=i'\\
\eta, & \quad f=\eta,\\
0 & \quad \text{otherwise}
\end{cases}
$$
$$
f\star\beta = \begin{cases} 
\beta, & \quad f=\beta\\
\psi, & \quad f=\psi\\
\nu_0, & \quad f=\nu_0\\
0 & \quad \text{otherwise}
\end{cases}
f\star\psi = \begin{cases} 
\psi, & \quad f=\psi\\
\nu_0, & \quad f=\nu_0\\
0 & \quad \text{otherwise}
\end{cases}.
$$
\begin{proof}
We will present the star product computations for $f\star \alpha_{s'}^{t'}$ and $f\star \beta$ for each $f\in\HHom_{\Lambda_n^e}(\mathbb{Q}_{*},\Lambda_n)$ using $(f\ot g)\Delta'_{n,*}$. The other computations are done in similar ways. We have
\begin{align*}
&(\alpha_s^t\star \alpha_{s'}^{t'})(e_l\ot e_l) = (\alpha_s^t\ot\alpha_{s'}^{t'})\Delta'(e_l\ot e_l)\\
&= (\alpha_s^t\ot\alpha_{s'}^{t'})[(e_l\ot e_l)\ott (e_l\ot e_l) + \sum_{w\neq e_l} (w\ot w)\ott (e_l\ot e_l) + (e_l\ot e_l)\ott (w\ot w) ]\\
&= (\alpha_s^t(e_l\ot e_l)\ot\alpha_{s'}^{t'}(e_l\ot e_l)) = (a_sa_{s+1}a_{s+2})^t(a_sa_{s+1}a_{s+2})^{t'} = (a_sa_{s+1}a_{s+2})^{t+t'}\\
&= \alpha_s^{t+t'}(e_l\ot e_l),
\end{align*}
provided $t+t'\leq n$. We note that $(\alpha_s^t\star \alpha_{s'}^{t'})(e_l\ot e_l) =0$ if $s\neq s'$ or $t+u>n$. so $(\alpha_s^t\star \alpha_{s'}^{t'})= \alpha_{s}^{t+t'}, (s=s')$. In similar fashion,
\begin{align*}
&(\alpha_s^t\star \beta)(e_l\ot e_l) = (\alpha_s^t\ot\beta)\Delta'(e_l\ot e_l)\\
&= (\alpha_s^t\ot\beta)[(e_l\ot e_l)\ott (e_l\ot e_l) + \sum_{w\neq e_l} (w\ot w)\ott (e_l\ot e_l) + (e_l\ot e_l)\ott (w\ot w)]\\
&= (\alpha_s^t(e_l\ot e_l)\ot\beta(f_1\ot f_1)) = (a_sa_{s+1}a_{s+2})^tf_1 = 0,
\end{align*}
so $(\alpha_s^t\star \beta)= 0$, since the terminal vertex of $a_sa_{s+1}a_{s+2}$ is either vertex $e_1$ or $e_2$. For this case, 
\begin{align*}
&(\beta\star \beta)(f_1\ot f_1) = (\beta\ot\beta)(\Delta')(f_1\ot f_1)\\
&= (\beta\ot\beta)[(f_1\ot f_1)\ott (f_1\ot f_1) + \sum_{w\neq f_1} (w\ot w)\ott (f_1\ot f_1) + (f_1\ot f_1)\ott (w\ot w)]\\
&= (\beta(f_1\ot f_1)\ot\beta(f_1\ot f_1)) = f_1f_1 = f_1 = \beta(f_1\ot f_1).
\end{align*}
\begin{align*}
    &(\mu_i^t \star \alpha_{s'}^{t'})(e_l\ot e_{l+1})=(\mu_i^t\ot\alpha_{s}^{t'})\Delta'(e_{l}\ot e_{l+1})\\
    &=(\mu_i^t\ot \alpha_{s}^{t'})[(e_l\ot e_l)\ott (e_l\ot e_{l+1}) + (e_{l}\ot e_{l+1})\ott (e_{l+1}\ot e_{l+1}) + \\&(e_l\ot e_{l+1})\ott (e_l\ot e_l)+ (e_{l+1}\ot e_{l+1})\ott (e_l\ot e_{l+1}) \\
    & + \sum_{w\neq e_l, e_{l+1}}(w\ot w)\ott (e_l\ot e_l) + (e_l\ot e_l)\ott (w\ot w)]\\
    &=(\mu_i^t(e_l\ot e_{l+1})\ott \alpha_{s'}^{t'}(e_{l+1}\ot e_{l+1})
    +\mu_i^t(e_l\ot e_{l+1})\ott \alpha_{s'}^{t'}(e_l\ot e_l))
\end{align*}
\begin{itemize}
\item Case 1: If $i=l$ and $s'=l+1$, we have,
$((a_la_{l+1}a_{l+2})^ta_l(a_{l+1}a_{l+2}a_{l})^{t'}+0)=(a_la_{l+1}a_{l+2})^{t+t'}a_l=\mu_i^{t+t'}(e_l\ot e_{l+1})$, so $(\mu_i^t\smallsmile \alpha_{s'}^{t'})= \mu_{i}^{t+t'}, (i=l)$.
\item  Case 2: If $i=l=s'$, we have $(a_la_{l+1}a_{l+2})^ta_l(a_la_{l+1}a_{l+2})^{t'}=\mu_i^{t+t'}(e_l\ot e_{l+1}),$ so that $(\mu_i^t \star \alpha_{s'}^{t'})=\mu_i^{t+t'}$.
\end{itemize}
\begin{align*}
    &(\mu_i^t\star \beta)(e_l\ot e_{l+1})=\pi(\mu_i^t\ot \beta)\Delta'(e_l\ot e_{l+1})\\
    &=(\mu_i^t\ot \beta)[(e_l\ot e_l)\ott (e_l\ot e_{l+1}) + (e_{l}\ot e_{l+1})\ott (e_{l+1}\ot e_{l+1}) + \\&(e_l\ot e_{l+1})\ott (e_l\ot e_l)+ (e_{l+1}\ot e_{l+1})\ott (e_l\ot e_{l+1}) \\
    &+ \sum_{w\neq e_l, e_{l+1}}(w\ot w)\ott (e_l\ot e_l) + (e_l\ot e_l)\ott (w\ot w)]\\
    &=(\mu_i^t(e_l\ot e_{l+1})\ott \beta(e_{l+1}\ot e_{l+1})+\mu_i^t(e_l\ot e_{l+1})\ott \beta(e_l\ot e_l))\\
    &=0,\text{so we have} \;(\mu_i^t\star \beta)= 0.
\end{align*}
\begin{align*}
    &(\mu_i^{t} \star \mu_{i'}^{t'})(e_l\ot e_{l+1})=\pi(\mu_i^t\ot \mu_{i'}^{t'})\Delta'(e_{l}\ot e_{l+1})\\
    &=(\mu_i^t\ot \mu_{i'}^{t'})[(e_l\ot e_l)\ott (e_l\ot e_{l+1}) + (e_{l}\ot e_{l+1})\ott (e_{l+1}\ot e_{l+1}) + \\&(e_l\ot e_{l+1})\ott (e_l\ot e_l)+ (e_{l+1}\ot e_{l+1})\ott (e_l\ot e_{l+1})\\
    &+ \sum_{w\neq e_l, e_{l+1}}(w\ot w)\ott (e_l\ot e_l) + (e_l\ot e_l)\ott (w\ot w)]\\
    &=0
\end{align*}
\begin{align*}
&(\theta_i^t\star \alpha_{s'}^{t'})(e_l\ot e_{l+2})=\pi(\theta_i^t \ot \alpha_{s'}^{t'})\Delta'(e_l\ot e_{l+2})\\
&=(\theta_i^t\ot \alpha_{s'}^{t'})[(e_l\ot e_l)\ott (e_l\ot e_{l+2})+(e_l\ot e_{l+1})\ott (e_{l+1}\ot e_{l+2})\\
&+(e_l\ot e_{l+2})\ott (e_{l+2}\ot e_{l+2})]+ (e_{l+2}\ot e_{l+2})\ott (e_l\ot e_{l+2}) \\
&+ (e_{l+1}\ot e_{l+2})\ott (e_l\ot e_{l+1})+ (e_l\ot e_{l+2})\ott (e_{l}\ot e_{l})\\ &+\sum_{w\neq e_l,e_{l+1}, e_{l+2}} (w\ot w)\ott (e_l\ot e_l) + (e_l\ot e_l) \ott (w\ot w)]\\
&=(\theta_i^t(e_l\ot e_{l+2})\ott \alpha_{s'}^{t'}(e_l\ot e_l))\\
&=(a_la_{l+1}a_{l+2})^ta_la_{l+1} (a_la_{l+1}a_{l+2})^{t'}=(a_la_{l+1}a_{l+2})^{t+t'}a_{l}a_{l+1}=\theta_i^{t+t'}(e_l\ot e_{l+2}),\\
&\text{so we have}\;(\theta_i^t\star \alpha_{s'}^{t'})= \theta_{i}^{t+t'}, (s'=i+2).
\end{align*}
\begin{align*}
    &(\theta_i^t\star \beta)(e_l\ot e_{l+2})=\pi(\theta_i^t\ot \beta)\Delta'(e_l\ot e_{l+2})\\
    &=(\theta_i^t\ot \beta)[(e_l\ot e_l)\ott (e_l\ot e_{l+2})+(e_l\ot e_{l+1})\ott (e_{l+1}\ot e_{l+2})\\
    & +(e_l\ot e_{l+2})\ott (e_{l+2}\ot e_{l+2})]+\\
&(e_{l+2}\ot e_{l+2})\ott (e_l\ot e_{l+2})+ (e_{l+1}\ot e_{l+2})\ott (e_l\ot e_{l+1})+ (e_l\ot e_{l+2})\ott (e_{l}\ot e_{l})\\ &+\sum_{w\neq e_l,e_{l+1}, e_{l+2}} w\ot w\ott e_l\ot e_l + e_l\ot e_l \ott w\ot w]=0, (\theta_i^t\smallsmile \beta)=0.
\end{align*}
\begin{align*}
    &(\nu_0\star \alpha_{s'}^{t'})(e_0\ot f_1)=(\nu_0\ot \alpha_{s'}^{t'})\Delta'(e_0\ot f_1)\\
    &=(\nu_0\ot \alpha_{s'}^{t'})[(e_0\ot e_0)\ott (e_0\ot f_1) + (e_0\ot f_1)\ott (f_1\ot f_1) + \\
    &(e_0\ot f_1)\ott (e_0\ot e_0) + (f_1\ot f_1)\ott (e_0\ot f_1)\\
    & + \sum_{w\neq e_0, f_1} (w\ot w) \ott (e_l\ot e_l) + (e_l\ot e_l) \ott (w\ot w)]\\
    &=((\nu_0(e_0\ot f_1)\ott \alpha_{s'}^{t'}(e_0\ot e_0))=b_0(a_0a_{1}a_{2})^t=0, \text{so we have  }\;(\nu_0\star \alpha_{s'}^{t'}) = 0.
\end{align*}
\begin{align*}
    &(\nu_0 \star \beta)(e_0\ot f_1)=(\nu_0\ot \beta)\Delta'(e_0\ot f_1)\\
    &=(\nu_0\ot \beta)[(e_0\ot e_0)\ott (e_0\ot f_1) + (e_0\ot f_1)\ott (f_1\ot f_1) + \\
    &(e_0\ot f_1)\ott (e_0\ot e_0) + (f_1\ot f_1)\ott (e_0\ot f_1) \\
    &+ \sum_{w\neq e_0, f_1} (w\ot w) \ott (e_l\ot e_l) + (e_l\ot e_l) \ott (w\ot w)]\\
    &=(\nu_0(e_0\ot f_1)\ott \beta (f_1\ot f_1)=b_0f_1 =b_0=\nu_0(e_0\ot f_1)
\end{align*}
\begin{align*}
    &(\phi_i^t \star \alpha_{s'}^{t'})(e_l\ot e_l)=\pi(\phi_i^t \ot \alpha_{s'}^{t'})\Delta'(e_l\ot e_l)\\
    &=(\phi_i^t\ot \alpha_{s'}^{t'})[e_l\ot e_l\ott e_l\ot e_l + \sum_{w\neq e_l} w\ot w\ott e_l\ot e_l + e_l\ot e_l\ott w\ot w]\\
    &=(\phi_i^t(e_l\ot e_l)\ot \alpha_{s'}^{t'}(e_l\ot e_l))\\
    &=((a_ia_{i+1}a_{i+2})^t\ot (a_{s'}a_{s'+1}a_{s'+2})^{t'})=(a_ia_{i+1}a_{i+2})^t(a_{s'}a_{s'+1}a_{s'+2})^{t'}\\
    &=(a_ia_{i+1}a_{i+2})^{t+t'}=\phi_i^{t+t'}(e_l\ot e_l), \text{ so } (\phi_i^t \star \alpha_{s'}^{t'})=\phi_i^{t+t'}.
\end{align*}
The second last equality follows from the fact that $i$ and $s'$ needs to be equal for the cup product $(\phi_i^t\star \alpha_{s'}^{t'})$ to be non-zero and the last equality holds as $\phi_i^t\star \alpha_{s'}^{t'}\in \mathbb{Q}_{3m}$.
\begin{align*}
    & (\phi_i^t\star \beta)(e_l\ot e_l)=(\phi_i^t\ot \beta)\Delta'(e_l\ot e_l)\\
    =& (\phi_i^t\ot \beta)[e_l\ot e_l\ott e_l\ot e_l + \sum_{w\neq e_l} w\ot w\ott e_l\ot e_l + e_l\ot e_l\ott w\ot w]\\
    &=(\phi_i^t(e_l\ot e_l)\ot \beta (f_1\ot f_1)) =(a_ia_{i+1}a_{i+2})^tf_1=0.
\end{align*}
\begin{align*}
    &(\psi \star \alpha_{s'}^{t'})(e_l\ot e_l)=(\psi \ot \alpha_{s'}^{t'})\Delta'(e_l\ot e_l))\\
    &=(\psi \ot \alpha_{s'}^{t'})[e_l\ot e_l\ott e_l\ot e_l + \sum_{w\neq e_l} w\ot w\ott e_l\ot e_l + e_l\ot e_l\ott w\ot w]\\
    &=(\psi(f_1\ot f_1)\ot \alpha_{s'}^{t'}(e_l\ot e_l))=f_1(a_{s'}a_{s'+1}a_{s'+2})^{t'}=0
\end{align*}
\begin{align*}
    &(\psi\star \beta)(e_l\ot e_l)=(\psi\ot \beta)\Delta'(e_l\ot e_l)\\
    &=(\psi\ot \beta)[e_l\ot e_l\ott e_l\ot e_l + \sum_{w\neq e_l} w\ot w\ott e_l\ot e_l + e_l\ot e_l\ott w\ot w]\\
    &=(\psi(f_1\ot f_1) \ot \beta(f_1\ot f_1))=f_1f_1=f_1, \text{so we have} \; (\psi\star \beta)=\psi.
\end{align*}
\begin{align*}
    &(\eta\star \alpha_{s'}^{t'})(e_0\ot e_2)=(\eta\ot \alpha_{s'}^{t'})\Delta'(e_0\ot e_2)\\
    &=(\eta\ot \alpha_{s'}^{t'})[(e_0\ot e_0)\ott (e_0\ot e_{2})+(e_0\ot e_{2})\ott (e_{2}\ot e_{2})]\\
    &+(e_{2}\ot e_{2})\ott (e_0\ot e_{2})+ (e_0\ot e_{2})\ott (e_{0}\ot e_{0})+\\
    &\sum_{w\neq e_0, e_{2}} (w\ot w)\ott (e_l\ot e_l) + (e_l\ot e_l) \ott (w\ot w)]\\
    &=(\eta(e_0\ot e_2)\ot \alpha_{s'}^{t'}(e_2\ot e_2) + \eta(e_0\ot e_2) \ot \alpha_{s'}^{t'}(e_0\ot e_0)+\eta(e_0\ot e_2)\ot \alpha_{s'}^{t'}(e_1\ot e_1)),\\
&\text{so we have  }\;(\eta\star \alpha_{s'}^{t'})= \eta, (i'=2)\; \text{from the following cases}.
\end{align*}
\begin{itemize}
\item Case 1: If $s'=2$ we have 
 $(a_0a_1a_2)^na_0a_1(a_2a_0a_1)^{t'}=(a_0a_1a_2)^{n+t'}a_0a_1=\eta(e_0\ot e_2).$
\item Case 2: If $s'= 1, 3$, we get $0$.
\end{itemize}
\begin{align*}
    &(\eta \star \beta)(f_1\ot f_1)=(\eta \ot \beta)\Delta'(f_1\ot f_1)\\
    &=(\eta \ot \beta)[(f_1\ot f_1)\ott (f_1\ot f_1)\\
    &+\sum_{w\neq f_1}(w\ot w)\ott (f_1\ot f_1)+(f_1\ot f_1)\ott (w\ot w)] &=0
\end{align*}
\begin{align*}
    &(\nu_1\star \alpha_{s'}^{t'})(f_1\ot e_2)=(\nu_1\ot \alpha_{s'}^{t'})\Delta'(f_1\ot e_2)\\
    &=(\nu_1\ot \alpha_{s'}^{t'})[(f_1\ot f_1)\ott (f_1\ot e_2) + (f_1\ot e_2)\ott (f_1\ot f_1) + \\
    &(f_1\ot e_2)\ott (e_2\ot e_2) + (e_2\ot e_2)\ott (f_1\ot e_2)\\
    &+ \sum_{w\neq f_1, e_2} (w\ot w) \ott (e_l\ot e_l) + (e_l\ot e_l) \ott (w\ot w)]\\
    &=((\nu_1(f_1\ot e_2)\ott \alpha_{s'}^{t'}(e_2\ot e_2))=b_1(a_2a_{0}a_{1})^t=b_1, \;\text{ so }(\nu_1\star \alpha_{s'}^{t'}) = \nu_1, (s'=2, t'=0).
\end{align*}
\begin{align*}
    &(\nu_1\star \beta)(f_1\ot e_2)=(\nu_1\ot \beta)\Delta'(f_1\ot e_2)\\
    &=\pi(\nu_1\ot \beta)[(f_1\ot f_1)\ott (f_1\ot e_2) + (f_1\ot e_2)\ott (f_1\ot f_1) + \\
    &(f_1\ot e_2)\ott (e_2\ot e_2) + (e_2\ot e_2)\ott (f_1\ot e_2)+ \sum_{w\neq f_1, e_2} (w\ot w) \ott (e_l\ot e_l) + (e_l\ot e_l) \ott (w\ot w)]\\
    &=(\nu_1(f_1\ot e_2)\ott \beta(f_1\ot f_1))=b_1f_1=0, \text{ so }\;(\nu_1\star \beta)=0.
\end{align*}
\end{proof}


\end{document}